\newcommand{\R}{\mathbb R}
\newcommand{\C}{\mathbb C}
\newcommand{\EE}{\mathcal E}
\newcommand{\LL}{\mathcal L}
\newcommand{\TT}{\mathcal T}
\DeclareMathOperator{\Ls}{L}
\DeclareMathOperator{\LLs}{\mathbf L}
\DeclareMathOperator{\Hs}{H}
\DeclareMathOperator{\HHs}{\mathbf H}
\DeclareMathOperator{\AAs}{\mathbf A}
\DeclareMathOperator{\DDs}{\mathbf D}
\DeclareMathOperator{\KKs}{\mathbf K}
\DeclareMathOperator{\NNs}{\mathbf N}
\DeclareMathOperator{\Cs}{C}
\DeclareMathOperator{\CCs}{\mathbf C}
\DeclareMathOperator{\UUs}{\mathbf U}
\DeclareMathOperator{\VVs}{\mathbf V}
\DeclareMathOperator{\SSs}{\mathbf S}
\DeclareMathOperator{\XXs}{\mathbf X}
\DeclareMathOperator{\YYs}{\mathbf Y}
\DeclareMathOperator{\Real}{Re}
\DeclareMathOperator{\cond}{cond}
\DeclareMathOperator{\RT}{RT}
\DeclareMathOperator{\MMs}{\mathbf M}
\DeclareMathOperator{\GGs}{\mathbf G}
\DeclareMathOperator{\ZZs}{\mathbf Z}
\newcommand{\brac}[1]{\left\lbrace{#1}\right\rbrace}
\newcommand{\paren}[1]{\left({#1}\right)}
\newcommand{\norm}[1]{\left\lVert{#1}\right\rVert}
\newcommand{\abs}[1]{\left\vert{#1}\right\vert}
\newcommand{\inprod}[1]{\left\langle{#1}\right\rangle}
\newcommand{\wt}[1]{\widetilde{#1}}
\newcommand{\wh}[1]{\widehat{#1}}
\newcommand{\ovl}[1]{\overline{#1}}
\newcommand{\transpose}{\mathsf{T}}
\newcommand{\bm}[1]{\boldsymbol{#1}}
\newcommand{\curlt}{\textbf{\textup{curl}}}
\newcommand{\divt}{\textup{div}}
\newcommand{\gradt}{\textbf{grad}}
\newcommand{\veps}{\varepsilon}
\newcommand{\pa}{\partial}
\newcommand{\gm}{\gamma}
\newcommand{\Gm}{\Gamma}
\newcommand{\om}{\omega}
\newcommand{\Om}{\Omega}
\newcommand{\sm}{\sigma}
\newcommand{\vphi}{\varphi}
\newcommand{\fa}{\forall}
\newcommand{\sst}{\subset}
\newcommand{\Ab}{\bm{A}}
\newcommand{\xb}{\bm{x}}
\newcommand{\yb}{\bm{y}}
\newcommand{\zb}{\bm{z}}
\newcommand{\ab}{\bm{a}}
\newcommand{\eb}{\bm{e}}
\newcommand{\vb}{\bm{v}}
\newcommand{\ub}{\bm{u}}
\newcommand{\wb}{\bm{w}}
\newcommand{\tv}{\textbf{t}}
\newcommand{\bb}{\bm{b}}
\newcommand{\nv}{\textbf{n}}
\newcommand{\zrb}{\bm{0}}
\newcommand{\vphib}{\bm{\varphi}}
\newcommand{\psib}{\bm{\psi}}
\newcommand{\xib}{\bm{\xi}}
\newcommand{\Psib}{\bm{\Psi}}
\newcommand{\ra}{\rightarrow}
\newcommand{\sra}{\rightharpoonup}
\newcommand{\dx}{\,\mathrm{d}\xb}
\newcommand{\ds}{\,\mathrm{d}s}
\newcommand{\q}{\quad}
\newcommand{\qq}{\qquad}
\newcommand{\qqq}{\qquad\quad}
\newcommand{\qqqq}{\qquad\qquad}
\newcommand{\qqqqq}{\qquad\qquad\quad}
\newcommand{\qqqqqq}{\qquad\qquad\qquad}
\crefname{hypothesis}{Hypothesis}{Hypotheses}
\title{An operator preconditioned combined field integral equation for electromagnetic scattering\thanks{Submitted to the editors DATE.
\funding{This work was funded by the European Research Council (ERC) under the European Union's Horizon 2020 research and innovation programme (Grant agreement No. 101001847).}}}
\author{Van Chien Le\thanks{IDLab, Department of Information Technology, Ghent University - imec, 9000 Ghent, Belgium 
  (\email{vanchien.le@ugent.be}, \email{kristof.cools@ugent.be}).}
\and Kristof Cools\footnotemark[2]}
\begin{document}

\maketitle

\begin{abstract}
This paper aims to address two issues of integral equations for the scattering of time-harmonic electromagnetic waves by a perfect electric conductor with Lipschitz continuous boundary: ill-conditioned boundary element Galerkin matrices on fine meshes and instability at spurious resonant frequencies. The remedy to ill-conditioned matrices is operator preconditioning, and resonant instability is eliminated by means of a combined field integral equation. Exterior traces of single and double layer potentials are complemented by their interior counterparts for a purely imaginary wave number. We derive the corresponding variational formulation in the natural trace space for electromagnetic fields and establish its well-posedness for all wave numbers. A Galerkin discretization scheme is employed using conforming edge boundary elements on dual meshes, which produces well-conditioned discrete linear systems of the variational formulation. Some numerical results are also provided to support the numerical analysis. 
\end{abstract}

\begin{keywords}
electromagnetic scattering, combined field integral equation, operator preconditioning, Calder\'{o}n projection formulas, Galerkin boundary element discretization
\end{keywords}

\begin{MSCcodes}
31B10, 65N38
\end{MSCcodes}

\section{Introduction}

\subsection{The scattering boundary value problem}

This paper concerns the scattering of electromagnetic waves by a perfect electric conductor, which plays a fundamental role in computational electromagnetics. Let $\Om \sst \R^3$ be an open bounded domain with a connected Lipschitz boundary $\Gm := \pa\Om$. The exterior region $\Om^c := \R^3 \setminus \ovl{\Om}$ is filled by a homogeneous, isotropic, and linear material with permittivity $\epsilon$ and permeability $\mu$, both are positive constants in $\Om^c$. Electromagnetic waves propagating outside $\Om$ are excited by a time-harmonic incident electric field $\eb^{in}$ of angular frequency $\om > 0$. Therefore, we can switch to a frequency-domain problem with unknown complex-valued spatial functions. The scattered electric field $\eb$ satisfies the following exterior Dirichlet boundary-value problem for the electric wave equation \cite[Section~6.4]{Colton1992}
\begin{align}
    & \curlt \, \curlt \, \eb - \kappa^2 \eb = \zrb && \text{in} \q \Om^c, \label{eq:wave} \\
    & \eb \times \nv = - \eb^{in} \times \nv && \text{on} \q \Gm, \label{eq:DBC}
\end{align}
supplemented with the Silver-M\"{u}ller radiation condition
\begin{equation}
    \label{eq:SilverMuller}
    \lim_{r \to \infty} \int_{\pa B_r} \abs{\curlt \, \eb \times \nv + i\kappa (\nv \times \eb) \times \nv}^2 \ds = 0.
\end{equation}
Here, $\kappa = \om \sqrt{\epsilon\mu} > 0$ is the wave number, $\nv$ is the unit normal vector on $\Gm$ pointing from $\Om$ to $\Om^c$, and $B_r$ is the ball of radius $r > 0$ centered at $0$. We refer the reader to Rellich's lemma \cite{Cessenat1996,Nedelec2001} for the existence and uniqueness of a solution to \eqref{eq:wave}-\eqref{eq:SilverMuller}.

\subsection{Challenges in boundary integral equations}

Boundary integral equations have become the most popular method to solve electromagnetic scattering problems in unbounded domains. Based on the integral representation formulas for solutions to Maxwell's equations, this method poses an alternative problem on the boundary of domains, leading to discrete systems of much smaller size. Prominent examples are the electric and magnetic field integral equations (EFIE and MFIE). This paper aims to address two issues of boundary integral equations for electromagnetic scattering by perfectly conducting bodies with Lipschitz continuous boundary: instability at spurious resonant frequencies (in short, resonant instability) and ill-conditioned Galerkin boundary element matrices on fine meshes. These issues arise when $\kappa^2$ is close to a resonant frequency (the former), or when the discrete problem involves a large number of unknowns (the latter), both manifest themselves in the ill-conditioning of the discrete linear systems.

\subsection{Previous and related works}

Resonant frequencies are Dirichlet or Neumann eigenvalues of the $\curlt \, \curlt$-operator inside $\Om$, at which the standard boundary integral equations are not uniquely solvable. Among some approaches to overcome the resonant instability (see, e.g., \cite{FK1998,HL1996}), combined field integral equations (CFIEs) are vastly more popular than others. CFIEs owe their name to an appropriate combination of the single and double layer potentials. This class of integral equations for electromagnetic scattering was pioneered by Panich in \cite{Panich1965}. A regularized CFIE was then introduced by Kress in \cite{Kress1986}. Both formulations are only applicable for domains with sufficiently smooth boundaries (specifically, belonging to the class $\Cs^2$), which are not suitable for the numerical implementation of the boundary element method. With the advancement in numerical analysis of Maxwell's equations in Lipschitz domains (see, e.g., \cite{BC2001,BC2001b,BCS2002a,BHV+2003}), some coercive CFIEs were proposed by Buffa and Hiptmair in \cite{BH2005}, and by Steinbach and Windisch in \cite{SW2009}, which are applicable for domains with Lipschitz continuous boundaries.

Despite the fact that CFIEs are well-posed at all frequencies, they may produce ill-conditioned matrices when involving a large number of discrete unknowns, which leads the numerical resolution by means of iterative schemes to be extremely expensive. The typical approach to curing this challenge is to employ an algebraic preconditioner. Some preconditioners for CFIEs were presented in \cite{ABL2007,Levadoux2008,BEP+2009,BT2014,AAE2016}. All the treatments primarily rely on the fact that the MFIE operator on a sufficiently smooth surface is a Fredholm operator of second kind. Unfortunately, this special property is no longer valid for non-smooth Lipschitz domains.

In computational engineering, numerous studies have been done on operator preconditioned CFIEs for electromagnetic scattering. For instance, three different CFIEs were introduced in \cite{CDE+2002}, where exterior EFIE and MFIE operators of a purely imaginary wave number serve as preconditioners. The unique solvability and the well-conditioning of Galerkin discretization matrices of these formulations were proven for spheres and surfaces obtained by smooth deformations of a sphere. In spite of their similar numerical behaviors, the CFIE which consists of the EFIE preconditioned by its counterpart and the MFIE (without a preconditioner) has attracted the most attention from engineers because of its computational simplicity. The uniqueness of a solution to this formulation was proven in \cite{ACB+2012}, and some stable and accurate discretization schemes were proposed in \cite{BAC+2009,CAY+2010}. A different approach was introduced in \cite{MBC+2020}, where the exterior EFIE and MFIE operators are complemented by their interior counterparts of a purely imaginary wave number. A proof of the injectivity of the governing operator was provided. Moreover, it has been shown that a Galerkin discretization of this Yukawa-Calder\'{o}n CFIE in conjunction with a discrete Helmholtz decomposition and a rescaling procedure results in linear systems that are well-conditioned at low frequencies and numerical experiments demonstrated well-conditioning for fine meshes. Both the stability at low frequencies and for fine meshes are maintained when multiply connected scatterers are considered and when round-off and quadrature errors are introduced.

\subsection{Novelty and contributions}

These attractive properties justify additional analysis of integral equations of the class introduced in \cite{MBC+2020}. In this paper, we investigate an indirect equation on Lipschitz domains. The indirect formulation is based on a potential representation for the scattered electric field $\eb$. Our main contributions are twofold: proving the well-posedness of the governing CFIE for all wave numbers and proposing a stable Galerkin boundary element discretization. The uniqueness of a solution is established by means of the ellipticity of the EFIE operator with a purely imaginary wave number. A generalized G{\aa}rding inequality is achieved by leveraging a Calder\'{o}n projection formula. In engineering papers, a proof of the coercivity of CFIE operators is typically omitted and in the analysis it is assumed that the MFIE operator is a Fredholm operator of second kind. However, this assumption is only valid for sufficiently smooth domains. In Lipschitz domains, the injectivity and a generalized G{\aa}rding inequality of the governing operator are necessary and sufficient for the unique solvability of the CFIE because of the Fredholm alternative. A mixed Galerkin discretization is then introduced using $\divt_\Gm$-conforming boundary elements on a pair of dual meshes. The unique solvability and the uniform boundedness of condition number of resulting linear systems are shown. It is important to note that the discretization scheme introduced in this paper differs from what practitioners typically use and in particular differs from the scheme studied in \cite{MBC+2020}.

\subsection{Outline}

This paper is organized as follows: the next section provides a concise summary of relevant function spaces and trace spaces, which are needed for numerical analysis throughout the paper. Then, section~\ref{sec:potentials} recalls the crucial potentials and integral operators for electromagnetic scattering. In section 4, we introduce the new CFIE, derive its variational formulation and prove its well-posedness. An equivalent mixed variational formulation and its Galerkin boundary element discretization are proposed in section~\ref{sec:discretization}. Section~\ref{sec:results} is devoted to some numerical results, which corroborate the stability of the CFIE as well as the convergence and well-conditioning of discrete linear systems. We end up with some conclusions and an outlook on future works in section~\ref{sec:conclusions}.

\section{Traces and spaces}
\label{sec:traces}

For any domain $\Om \subseteq \R^3$, let $\Hs^s(\Om)$ and $\HHs^s(\Om)$, with $s \geq 0$, be the Sobolev spaces of complex-valued scalar and vector functions on $\Om$ equipped with the standard graph norms, where $\Hs^0(\Om)$ and $\HHs^0(\Om)$ coincide with the Lebesgue spaces $\Ls^2(\Om)$ and $\LLs^2(\Om)$. For any function $u$, its complex conjugate is denoted by $\ovl{u}$. The following function spaces are the natural spaces for solutions of the electric wave equation \eqref{eq:wave} on bounded domains
\begin{align*}
    \HHs(\curlt, \Om) & := \brac{\ub \in \LLs^2(\Om) : \curlt \, \ub \in \LLs^2(\Om)}, \\
    \HHs(\curlt^2, \Om) & := \brac{\ub \in \HHs(\curlt, \Om) : \curlt \,\curlt \, \ub \in \LLs^2(\Om)},
\end{align*}
which are respectively endowed with the norms
\begin{align*}
    \norm{\ub}^2_{\HHs(\curlt, \Om)} & := \norm{\ub}^2_{\LLs^2(\Om)} + \norm{\curlt \, \ub}^2_{\LLs^2(\Om)}, \\
    \norm{\ub}^2_{\HHs(\curlt^2, \Om)} & := \norm{\ub}^2_{\HHs(\curlt, \Om)} + \norm{\curlt \, \curlt \, \ub}^2_{\LLs^2(\Om)}.
\end{align*}
On unbounded domains $\Om$, the space $\HHs_\textup{loc}(\curlt^2, \Om)$ is defined as the set of all vector functions $\ub$ such that $\varphi \ub \in \HHs(\curlt^2, \Om)$ for all compactly supported smooth scalar functions $\varphi \in \Cs^{\infty}(\R^3)$.

Next, we briefly introduce some trace spaces related to Maxwell's equations in a Lipschitz domain $\Om$. For more details, the reader is referred to the articles \cite{BC2001,BC2001b,BCS2002a,BH2003}. We define the continuous tangential trace operator $\gm_D : \HHs^1(\Om) \to \LLs^2_\tv(\Gm)$ by
\[
     \gm_D : \ub \mapsto \gm(\ub) \times \nv,
\]
where $\gm : \HHs^1(\Om) \to \LLs^2(\Gm)$ is the standard trace operator. The range of $\gm_D$ in $\LLs^2_\tv(\Gm)$ is denoted by $\HHs^{1/2}_{\times}(\Gm)$. Its dual space is denoted by $\HHs^{-1/2}_{\times}(\Gm)$, whose elements are identified via the $\LLs^2_\tv(\Gm)$ anti-symmetric pairing
\[
    \inprod{\ub, \vb}_{\times, \Gm} := \int_\Gm (\ub \times \nv) \cdot \vb \ds, \qqq \ub, \vb \in \LLs^2_\tv(\Gm) :=  \brac{\ub \in \LLs^2(\Gm) : \ub \cdot \nv = 0}.
\]
Let $s \in \{\frac{1}{2}, \frac{3}{2}\}$ and $\Hs^s(\Gm)$ be the trace space of functions in $\Hs^{s+1/2}(\Om)$ (the definition of $\Hs^{3/2}(\Gm)$ can be found in \cite[Proposition~3.4]{BCS2002a}). The dual space of $\Hs^s(\Gm)$ is denoted by $\Hs^{-s}(\Gm)$, where we make the usual identification $\Ls^2(\Gm) \sst \Hs^{-s}(\Gm)$ via the $\Ls^2(\Gm)$ pairing. The natural duality pairing between $\Hs^{-s}(\Gm)$ and $\Hs^s(\Gm)$ is denoted by $\langle\cdot, \cdot\rangle_{s, \Gm}$. We adopt the definition of the operator $\curlt_\Gm : \Hs^{3/2}(\Gm) \to \HHs^{1/2}_{\times}(\Gm)$ from \cite{BCS2002a}
\[
    \curlt_\Gm \, \gamma(\varphi) = \gm_D(\gradt \, \varphi), \qqqq \fa \varphi \in \Hs^{2}(\Om).
\]
The surface divergence operator $\divt_\Gm : \HHs^{-1/2}_{\times}(\Gm) \to \Hs^{-3/2}(\Gm)$ is then defined as the dual operator to $\curlt_\Gm$ 
\[
    \inprod{\divt_\Gm \ub, \varphi}_{3/2, \Gm} = - \inprod{\ub, \curlt_\Gm \varphi}_{1/2, \Gm}, \qqq \fa \ub \in \HHs^{-1/2}_{\times}(\Gm), \ \fa \varphi \in \Hs^{3/2}(\Gm).
\]
Now, we introduce the space 
\[
    \HHs^{-1/2}_{\times}(\divt_{\Gm}, \Gm) := \brac{\ub \in \HHs^{-1/2}_{\times}(\Gm) : \divt_{\Gm} \ub \in \Hs^{-1/2}(\Gm)},
\]
equipped with the graph norm
\[
    \norm{\ub}^2_{\HHs^{-1/2}_{\times}(\divt_{\Gm}, \Gm)} := \norm{\ub}^2_{\HHs^{-1/2}_{\times}(\Gm)} + \norm{\divt_\Gm \ub}^2_{\Hs^{-1/2}(\Gm)}.
\]
It is noteworthy that $\HHs^{-1/2}_{\times}(\divt_{\Gm}, \Gm)$ is the desired trace space for electromagnetic fields. An important property of the space $\HHs^{-1/2}_{\times}(\divt_{\Gm}, \Gm)$ is its self-duality, which was given in \cite{BC2001b} and \cite[Lemma~5.6]{BCS2002a}.

\begin{theorem}[self-duality of the space $\HHs^{-1/2}_{\times}(\divt_{\Gm}, \Gm)$]
\label{thm:duality}
    The pairing $\langle{\cdot, \cdot}\rangle_{\times, \Gm}$ can be extended to a continuous bilinear form on $\HHs^{-1/2}_{\times}(\divt_{\Gm}, \Gm)$. Moreover, the space $\HHs^{-1/2}_{\times}(\divt_{\Gm}, \Gm)$ becomes its own dual with respect to $\langle{\cdot, \cdot}\rangle_{\times, \Gm}$.
\end{theorem}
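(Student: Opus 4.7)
The plan is to extend the $\LLs^2_\tv(\Gm)$-pairing to $\HHs^{-1/2}_\times(\divt_\Gm,\Gm)$ via Green's integration-by-parts formula for the curl, and then to verify that the induced Riesz-type map is a topological isomorphism. Both steps rest on the surjectivity of the tangential trace $\gm_D:\HHs(\curlt,\Om)\to\HHs^{-1/2}_\times(\divt_\Gm,\Gm)$ together with the existence of a continuous right inverse $\LL$, both established in the cited references \cite{BC2001,BC2001b,BCS2002a,BH2003}.

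For the continuous extension of the pairing I would start from the classical Green identity for the curl,
\begin{equation*}
\int_\Om\bigl(\bm{U}\cdot\curlt\bm{V}-\curlt\bm{U}\cdot\bm{V}\bigr)\,\dx \;=\; \langle\gm_D\bm{U},\gm_D\bm{V}\rangle_{\times,\Gm},
\end{equation*}
valid for smooth $\bm{U},\bm{V}\in\Cs^\infty(\ovl{\Om};\R^3)$. Since the left-hand side is jointly continuous on $\HHs(\curlt,\Om)\times\HHs(\curlt,\Om)$, I would then use the lifting $\LL$ to \emph{define}
\begin{equation*}
\langle\ub,\vb\rangle_{\times,\Gm} \;:=\; \int_\Om\bigl(\LL\ub\cdot\curlt\LL\vb-\curlt\LL\ub\cdot\LL\vb\bigr)\,\dx
\end{equation*}
for all $\ub,\vb\in\HHs^{-1/2}_\times(\divt_\Gm,\Gm)$. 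Independence of this definition from the choice of lifting follows by a density argument, since two liftings differ by an element of the closure of $\Cs^\infty_c(\Om;\R^3)$ in $\HHs(\curlt,\Om)$, on which the interior bilinear form vanishes. Continuity of the resulting bilinear form is immediate from the continuity of $\LL$.

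For the self-duality claim I would consider the Riesz-type map $\Phi:\HHs^{-1/2}_\times(\divt_\Gm,\Gm)\to(\HHs^{-1/2}_\times(\divt_\Gm,\Gm))'$ given by $\Phi(\ub)(\vb):=\langle\ub,\vb\rangle_{\times,\Gm}$; its continuity is immediate. For injectivity, if $\Phi(\ub)=\zrb$, I would test against every $\vb=\gm_D\bm{V}$ with $\bm{V}\in\HHs(\curlt,\Om)$, which is legitimate by surjectivity of $\gm_D$, and conclude $\ub=\zrb$ via a standard variational argument. For surjectivity, given $\ell$ in the antidual, the composition $\ell\circ\gm_D$ is a continuous linear form on $\HHs(\curlt,\Om)$ that vanishes on $\ker\gm_D$. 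By the Riesz representation theorem there exists $\bm{V}\in\HHs(\curlt,\Om)$ with $\ell(\gm_D\bm{U})=(\bm{U},\bm{V})_{\HHs(\curlt,\Om)}$ for every $\bm{U}$, and orthogonality of $\bm{V}$ to $\ker\gm_D$ translates in the distributional sense into the identity $\curlt\curlt\bm{V}=-\bm{V}$, which in particular forces $\curlt\bm{V}\in\HHs(\curlt,\Om)$. Applying Green's identity to the pair $(\curlt\bm{V},\bm{U})$ then yields $\ell(\gm_D\bm{U})=\langle\gm_D(\curlt\bm{V}),\gm_D\bm{U}\rangle_{\times,\Gm}$, so that $\Phi(\gm_D(\curlt\bm{V}))=\ell$ by surjectivity of $\gm_D$.

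The main obstacle I anticipate lies in the surjectivity step, and more precisely in rigorously identifying $\ker\gm_D$ with the closure of $\Cs^\infty_c(\Om;\R^3)$ in $\HHs(\curlt,\Om)$, so that the orthogonality of the Riesz representative yields the PDE $\curlt\curlt\bm{V}=-\bm{V}$ at the distributional level. This identification is nontrivial on a merely Lipschitz boundary and is precisely where the smooth-surface shortcuts used, e.g., in \cite{CDE+2002,BAC+2009} break down; the correct framework is the one developed in the cited works on Maxwell traces in Lipschitz domains.
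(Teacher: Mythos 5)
The paper itself contains no proof of Theorem~\ref{thm:duality}: the result is imported from the literature, with the argument deferred to \cite{BC2001b} and \cite[Lemma~5.6]{BCS2002a}. Those references proceed structurally, via Hodge-type decompositions of the tangential trace spaces and rotation isomorphisms that reduce the pairing to scalar $\Hs^{1/2}(\Gm)$--$\Hs^{-1/2}(\Gm)$ dualities. Your route is genuinely different: extend the pairing through interior liftings and the Green form, then prove bijectivity of the induced map $\Phi$ by Riesz representation, recovering the representative of a functional $\ell$ as $\gm_D(\curlt\,\bm{V}) = \gm_N \bm{V}$ where $\bm{V}$ solves $\curlt\,\curlt\,\bm{V} = -\bm{V}$ distributionally. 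This strategy is sound, and it buys a measure of self-containedness: it needs only the trace theorem with continuous right inverse plus density of smooth fields in $\HHs(\curlt,\Om)$, whereas the decomposition proofs require the full Hodge machinery (but in return deliver finer structural information about the trace space). Note also that you correctly avoid circularity by invoking Green's identity only for smooth fields: the $\HHs(\curlt,\Om)$ version stated as Theorem~\ref{thm:integration_by_parts} already presupposes the extended pairing whose existence is being proved.

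Two points need repair, both fixable. First, you locate the main difficulty in the wrong step. Surjectivity needs only the trivial inclusion $\Cs^\infty_c(\Om;\R^3) \sst \ker\gm_D$ to derive the distributional equation; it is the well-definedness of your extension (independence of the lifting) and the injectivity of $\Phi$ that rest on the nontrivial identification $\ker\gm_D = \HHs_0(\curlt,\Om)$. Moreover, both steps can bypass that identification: the Green form is jointly continuous on $\HHs(\curlt,\Om)\times\HHs(\curlt,\Om)$, smooth fields are dense in $\HHs(\curlt,\Om)$ on Lipschitz domains, and vanishing of the Green form against all $\HHs^1(\Om)$ fields is precisely the weak characterization of $\gm_D\bm{W} = \zrb$ in $\HHs^{-1/2}_\times(\Gm)$; testing against $\gm_D(\HHs^1(\Om)) = \HHs^{1/2}_\times(\Gm)$ then gives injectivity and, simultaneously, the consistency of the extended pairing with the original $\HHs^{-1/2}_\times(\Gm)$--$\HHs^{1/2}_\times(\Gm)$ duality --- a point your ``standard variational argument'' leaves implicit but which the theorem as stated requires. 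Second, over $\C$ the pairing is bilinear while the Riesz representation is conjugate-linear in the representative: orthogonality of $\bm{V}$ to $\Cs^\infty_c(\Om;\R^3)$ yields $\curlt\,\curlt\,\ovl{\bm{V}} = -\ovl{\bm{V}}$, so the functional is represented by $\gm_D(\curlt\,\ovl{\bm{V}})$ rather than $\gm_D(\curlt\,\bm{V})$; harmless, but as written your formulas are off by a conjugation.
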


Finally, we introduce the trace of the energy space $\HHs(\curlt, \Om)$. The next theorem presents the extension of the tangential trace operator $\gm_D$ to $\HHs(\curlt, \Om)$ and a related integration by parts formula, see \cite[Theorem~4.1]{BCS2002a} or \cite[Theorem~2.3]{CH2012}.

\begin{theorem}[integration by parts formula]
    \label{thm:integration_by_parts}
    The tangential trace operator $\gm_D$ can be extended to a continuous mapping from $\HHs(\curlt, \Om)$ onto $\HHs^{-1/2}_{\times}(\divt_{\Gm}, \Gm)$, which possesses a continuous right inverse. In addition, the following integration by parts formula holds
    \begin{equation}
    \label{eq:identity}
        \int_\Om (\curlt \, \ub \cdot \vb - \ub \cdot \curlt \, \vb) \dx = -\inprod{\gm_D \ub, \gm_D \vb}_{\times, \Gm}, \q \fa \ub, \vb \in \HHs(\curlt, \Om).
    \end{equation}
\end{theorem}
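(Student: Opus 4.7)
The plan is to split the statement into three pieces: first establish the integration by parts identity for smooth vector fields, then use it to extend $\gm_D$ by duality to all of $\HHs(\curlt, \Om)$, and finally characterize the image by constructing a continuous right inverse. Throughout, I would rely on density of $(\Cs^\infty(\ovl{\Om}))^3$ in $\HHs(\curlt, \Om)$, which follows from standard mollification arguments in Lipschitz domains.

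First, for $\ub, \vb \in (\Cs^\infty(\ovl\Om))^3$ the vector identity $\dive(\ub \times \vb) = \curlt \, \ub \cdot \vb - \ub \cdot \curlt \, \vb$ together with the divergence theorem and the algebraic manipulation $(\ub \times \vb) \cdot \nv = (\ub \times \nv) \cdot \vb = \inprod{\ub, \vb}_{\times, \Gm}$ pointwise yields \eqref{eq:identity}. From the continuity of the classical trace $\gm : \HHs^1(\Om) \to \HHs^{1/2}(\Gm)$ and of multiplication by $\nv$ on the boundary, one also obtains that $\gm_D : \HHs^1(\Om) \to \HHs^{1/2}_\times(\Gm)$ is continuous and surjective.

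Second, fix $\ub \in \HHs(\curlt, \Om)$ and consider the linear form $\ell_\ub(\vb) := \int_\Om (\curlt \, \ub \cdot \vb - \ub \cdot \curlt \, \vb) \dx$ defined on $\HHs^1(\Om)$. Using the Cauchy--Schwarz inequality, $\ell_\ub$ is continuous. The smooth-case identity shows $\ell_\ub(\vb)$ depends on $\vb$ only through $\gm_D \vb$; combined with the continuous right inverse of $\gm_D$ from $\HHs^{1/2}_\times(\Gm)$ into $\HHs^1(\Om)$, this yields a continuous functional on $\HHs^{1/2}_\times(\Gm)$, which we identify with an element of $\HHs^{-1/2}_\times(\Gm)$ via $\inprod{\cdot, \cdot}_{\times, \Gm}$ and denote $-\gm_D \ub$. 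By construction this extends the classical trace and \eqref{eq:identity} holds by definition. To show that $\gm_D \ub$ actually lives in $\HHs^{-1/2}_\times(\divt_\Gm, \Gm)$, test against $\curlt_\Gm \varphi$ for $\varphi \in \Hs^{3/2}(\Gm)$: lift $\varphi$ to some $\wt\varphi \in \Hs^2(\Om)$, note $\gm_D(\gradt \wt\varphi) = \curlt_\Gm \varphi$, and use $\curlt \, \gradt \wt\varphi = \zrb$ in \eqref{eq:identity} to rewrite $\inprod{\divt_\Gm \gm_D \ub, \varphi}_{3/2, \Gm}$ as a quantity controlled by $\norm{\curlt \, \ub}_{\LLs^2(\Om)} \cdot \norm{\wt\varphi}_{\Hs^1(\Om)}$. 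Since $\wt\varphi$ can be chosen with $\norm{\wt\varphi}_{\Hs^1(\Om)} \lesssim \norm{\varphi}_{\Hs^{1/2}(\Gm)}$, this yields $\divt_\Gm \gm_D \ub \in \Hs^{-1/2}(\Gm)$ with the required continuity bound.

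Third, for the surjectivity and the existence of a continuous right inverse, I would solve an auxiliary interior boundary value problem: given $\lmdb \in \HHs^{-1/2}_\times(\divt_\Gm, \Gm)$, find $\ub \in \HHs(\curlt, \Om)$ with $\gm_D \ub = \lmdb$. A standard route is to seek $\ub$ as a solution of a regularized Maxwell problem of the form $\curlt \, \curlt \, \ub + \ub = \zrb$ in $\Om$ with tangential trace equal to $\lmdb$, written variationally; unique solvability in $\HHs(\curlt, \Om)$ together with an a priori estimate then provides the required right inverse. Finally, a density argument (approximating $\ub, \vb \in \HHs(\curlt, \Om)$ by smooth fields and passing to the limit using continuity of all terms) upgrades \eqref{eq:identity} from the classical setting to the full space. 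The main obstacle is the surjectivity step in the Lipschitz setting, as the usual lifting via $\HHs^1$-regular fields fails for tangential data that is merely in $\HHs^{-1/2}_\times(\divt_\Gm, \Gm)$; the auxiliary variational problem must be framed precisely in the energy space $\HHs(\curlt, \Om)$ and its well-posedness justified without appealing to extra regularity.
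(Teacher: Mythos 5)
The paper does not prove this theorem at all: it is quoted directly from \cite[Theorem~4.1]{BCS2002a} (see also \cite[Theorem~2.3]{CH2012}), so your attempt must be measured against what those references actually establish. The genuine gap is your third step, which is the heart of the matter. Prescribing $\gm_D \ub = \lmdb$ is an \emph{essential} boundary condition for the problem $\curlt\,\curlt\,\ub + \ub = \zrb$: to pose it variationally in $\HHs(\curlt,\Om)$ you must first produce some $\ub_0 \in \HHs(\curlt,\Om)$ with $\gm_D \ub_0 = \lmdb$ and then solve for the difference in the kernel space $\HHs_0(\curlt,\Om)$; there is no coercive formulation in which $\lmdb \in \HHs^{-1/2}_{\times}(\divt_\Gm,\Gm)$ enters only as natural right-hand-side data while the solution attains $\gm_D\ub = \lmdb$ as its trace. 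So the auxiliary boundary value problem presupposes exactly the surjectivity it is supposed to deliver --- the argument is circular, as your closing sentence half-concedes without resolving it. Closing this gap is precisely what \cite{BCS2002a} (building on the Hodge decompositions of \cite{BC2001b}) does, and it requires machinery absent from your outline: a Hodge-type splitting of $\HHs^{-1/2}_{\times}(\divt_\Gm,\Gm)$, solvability and regularity theory for Laplace--Beltrami-type equations on the Lipschitz surface $\Gm$, a lift of the surface-curl component in the form $\gm_D(\gradt\, \wt{q})$ with $\wt{q} \in \Hs^2(\Om)$, and a lift of the complementary component through a vector potential of a divergence-free field whose normal trace matches $\divt_\Gm \lmdb$. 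Without this (or an equivalent construction) the ``onto $\HHs^{-1/2}_{\times}(\divt_\Gm,\Gm)$ with continuous right inverse'' part of the statement remains unproven.

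Your first two steps are essentially the standard argument and are sound modulo repairable details. The pointwise identity should read $(\ub \times \vb)\cdot\nv = -(\ub\times\nv)\cdot\vb$ (your version has the wrong sign), and the pairing in \eqref{eq:identity} is between the rotated traces $\gm_D\ub = \gm(\ub)\times\nv$ and $\gm_D\vb$, not between the restrictions of $\ub$ and $\vb$ to $\Gm$, so the two rotations must be tracked to recover the minus sign in \eqref{eq:identity}. In the surface-divergence step, the requirement of a lift $\wt{\varphi} \in \Hs^2(\Om)$ of $\varphi$ satisfying simultaneously $\norm{\wt{\varphi}}_{\Hs^1(\Om)} \leqc \norm{\varphi}_{\Hs^{1/2}(\Gm)}$ is itself a nontrivial claim on a Lipschitz domain; it is cleaner to observe that $\curlt\,\ub$ belongs to $\HHs(\divt,\Om)$ with $\divt(\curlt\,\ub) = 0$, so that $\int_\Om \curlt\,\ub \cdot \gradt\,\wt{\varphi} \dx$ is the normal-trace duality pairing of $\curlt\,\ub$ with $\varphi$ and is bounded by $C\norm{\curlt\,\ub}_{\LLs^2(\Om)}\norm{\varphi}_{\Hs^{1/2}(\Gm)}$ directly; this also identifies $\divt_\Gm(\gm_D\ub)$ with the normal trace of $\curlt\,\ub$, which is the natural route to the mapping property you want.
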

The traces of $\HHs(\curlt^2, \Om)$ involve the Neumann trace operator $\gm_N := \gm_D \circ \curlt$, which continuously maps $\HHs(\curlt^2, \Om)$ onto $\HHs^{-1/2}_{\times}(\divt_{\Gm}, \Gm)$, see \cite{BH2005}.

\section{Potentials and integral operators}
\label{sec:potentials}

In this section, we introduce the relevant potentials and boundary integral operators for electromagnetic scattering. In order to support the numerical analysis in the next sections, the potentials and integral operators are defined for wave number $\sm$, where $\sm = \kappa$ or $\sm = i\kappa$ with $\kappa > 0$. The reader can find more details on the case of a purely imaginary wave number in \cite[Section~5.6.4]{Nedelec2001}, \cite[Section~5.1]{Hiptmair2003b} and \cite{SW2009}. For the sake of convenience, we restate here the wave equation \eqref{eq:wave} and the Silver-M\"{u}ller radiation condition \eqref{eq:SilverMuller}, which are now associated with the wave number $\sm$
\begin{align}
    & \,\, \curlt \, \curlt \, \ub - \sm^2 \ub = \zrb \qqqqqq \text{in} \q \Om \cup \Om^c, \label{eq:wave1} \\
    & \lim_{r \to \infty} \int_{\pa B_r} \abs{\curlt \, \ub \times \nv + i\sm (\nv \times \ub) \times \nv}^2 \ds = 0. \label{eq:SilverMuller1}
\end{align}
    Let $G_{\sm}(\xb, \yb)$ be the fundamental solution associated with the operator $\Delta + \sm^2$, i.e.,
\[
    G_\sm(\xb, \yb) := \dfrac{\exp(i\sm\abs{\xb - \yb})}{4\pi \abs{\xb - \yb}}, \qqqqq \xb \neq \yb.
\]
We use this kernel to define the scalar and vectorial single layer potentials
\[
    \Psi^\sm_V(\varphi)(\xb) := \int_\Gm \varphi(\yb) G_\sm(\xb, \yb) \ds(\yb), \qq \Psib^\sm_{\Ab}(\ub)(\xb) := \int_\Gm \ub(\yb) G_\sm(\xb, \yb) \ds(\yb),
\]
with $\xb \notin \Gm$. For electromagnetic scattering, the following ``Maxwell single and double layer potentials'' are essential
\[
    \Psib^\sm_{SL}(\ub) := \Psib^\sm_{\Ab} (\ub) + \dfrac{1}{\sm^2} \gradt \, \Psi^\sm_V(\divt_\Gm \ub), \qqq \Psib^\sm_{DL}(\ub) := \curlt\, \Psib^\sm_{\Ab}(\ub).
\]
The potentials $\Psib^\sm_{SL}$ and $\Psib^\sm_{DL}$ are continuous mappings from $\HHs^{-1/2}_{\times}(\divt_\Gm, \Gm)$ into $\HHs_{\textup{loc}}(\curlt^2, \Om \cup \Om^c)$, see \cite[Theorem~17]{Hiptmair2003b}. Moreover, for any $\ub \in \HHs^{-1/2}_{\times}(\divt_\Gm, \Gm)$, both $\Psib^{\sm}_{SL}(\ub)$ and $\Psib^{\sm}_{DL}(\ub)$ are solutions to the equation \eqref{eq:wave1} and fulfill the radiation condition \eqref{eq:SilverMuller1}. On the other hand, any solution $\ub \in \HHs_{\textup{loc}}(\curlt^2, \Om \cup \Om^c)$ to \eqref{eq:wave1}-\eqref{eq:SilverMuller1} satisfies the following variant of the well-known Stratton-Chu representation formula (see, e.g., \cite[Theorem~6.2]{Colton1992}, \cite[Theorem~5.5.1]{Nedelec2001} and \cite[Theorem~22]{Hiptmair2003b})
\begin{equation}
\label{eq:representation}
    \ub(\xb) = - \Psib^{\sm}_{DL}(\left[\gm_D\right]_{\Gm} \ub)(\xb) - \Psib^{\sm}_{SL}(\left[\gm_N\right]_{\Gm} \ub) (\xb), \qqq \xb \in \Om \cup \Om^c,
\end{equation}
where $\left[\gm\right]_{\Gm} := \gm^+ - \gm^-$ is the jump of some trace $\gm$ across $\Gm$, and the superscripts $-$ and $+$ designate traces onto $\Gm$ from $\Om$ and $\Om^c$, respectively. 
Next, please bear in mind the fact that
\begin{equation}
\label{eq:sym1}
    \curlt \circ \Psib^\sm_{SL} = \Psib^\sm_{DL}, \qqqqq \curlt \circ \Psib^\sm_{DL} = \sm^2 \Psib^\sm_{SL},
\end{equation}
which immediately implies
\begin{equation}
    \label{eq:sym2}
    \gm^\pm_N \circ \Psib^\sm_{SL} = \gm^\pm_D {\circ} \Psib^\sm_{DL}, \qqqq \gm^\pm_N \circ \Psib^\sm_{DL} = \sm^2 \gm^\pm_D \circ \Psib^\sm_{SL}.
\end{equation}
The ``symmetric'' relations \eqref{eq:sym1} and \eqref{eq:sym2} convince us that only integral operators associated with the Maxwell single layer potential $\Psib^\sm_{SL}$ (or the Maxwell double layer potential $\Psib^\sm_{DL}$) are needed to be explicitly defined. Let $\brac{\gm}_\Gm := \frac{1}{2}(\gm^+ + \gm^-)$ be the average of exterior and interior traces $\gm$ on the boundary $\Gm$. Now, we are ready to define our desired boundary integral operators.
\begin{theorem}
\label{thm:intops}
    For $\sm = \kappa$ or $\sm = i\kappa$ with $\kappa > 0$, the boundary integral operators
    \begin{align*}
        & V_\sm := \brac{\gm}_\Gm \circ \Psi^\sm_V && : \Hs^{-1/2}(\Gm) \to \Hs^{1/2}(\Gm), \\
        & A_\sm := \brac{\gm_D}_\Gm \circ \Psib^\sm_{\Ab} && : \HHs^{-1/2}_\times(\Gm) \to  \HHs^{1/2}_\times(\Gm), \\
        & S_\sm := \brac{\gm_D}_\Gm \circ \Psib^\sm_{SL} && : \HHs^{-1/2}_{\times}(\divt_\Gm, \Gm) \to \HHs^{-1/2}_{\times}(\divt_\Gm, \Gm), \\
        & C_\sm := \brac{\gm_N}_\Gm \circ \Psib^\sm_{SL} && : \HHs^{-1/2}_{\times}(\divt_\Gm, \Gm) \to \HHs^{-1/2}_{\times}(\divt_\Gm, \Gm),
    \end{align*}
    are well defined and continuous. Moreover, the following relation holds
    \begin{equation}
    \label{eq:efio}
        S_\sm = A_\sm + \dfrac{1}{\sm^2} \curlt_\Gm \circ V_\sm \circ \divt_\Gm.
    \end{equation}
\end{theorem}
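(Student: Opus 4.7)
The plan is to establish the continuity of the four operators one by one from the mapping properties of their underlying potentials, and then to derive the identity \eqref{eq:efio} directly from the definition of $\Psib^{\sm}_{SL}$.

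First I would handle $V_{\sm}$ and $A_{\sm}$ simultaneously. For $V_{\sm}$, I would invoke the classical result that the scalar single layer potential $\Psi^{\sm}_{V}$ associated with the operator $\Delta + \sm^{2}$ is a continuous mapping from $\Hs^{-1/2}(\Gm)$ into $\Hs^{1}_{\mathrm{loc}}(\R^{3})$ whose Dirichlet trace is continuous across $\Gm$; composing with either $\gm^{+}$ or $\gm^{-}$ (and hence with their average $\brac{\gm}_{\Gm}$) then produces a bounded operator into $\Hs^{1/2}(\Gm)$. For $A_{\sm}$, the same argument applies componentwise to $\Psib^{\sm}_{\Ab}$, and the resulting trace lies in $\HHs^{1/2}_{\times}(\Gm)$ by definition of that space as the image of $\gm_{D}$ on $\HHs^{1}$.

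Next I would treat $S_{\sm}$ and $C_{\sm}$ by combining two ingredients already cited in the excerpt: the continuity of $\Psib^{\sm}_{SL} : \HHs^{-1/2}_{\times}(\divt_{\Gm}, \Gm) \to \HHs_{\mathrm{loc}}(\curlt^{2}, \Om \cup \Om^{c})$ quoted from Hiptmair's work, and the trace theorems of \cref{thm:integration_by_parts} (plus the statement that follows it for $\gm_{N}$). Applying $\gm_{D}^{\pm}$ and averaging gives $S_{\sm}$; applying $\gm_{N}^{\pm} = \gm_{D}^{\pm} \circ \curlt$ and averaging gives $C_{\sm}$, both with range in $\HHs^{-1/2}_{\times}(\divt_{\Gm}, \Gm)$.

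Finally, for the identity \eqref{eq:efio}, I would simply take the average tangential trace of both sides of the definition of $\Psib^{\sm}_{SL}$ to obtain
\[
    \brac{\gm_{D}}_{\Gm} \Psib^{\sm}_{SL}(\ub) \;=\; \brac{\gm_{D}}_{\Gm} \Psib^{\sm}_{\Ab}(\ub) + \dfrac{1}{\sm^{2}} \brac{\gm_{D}}_{\Gm} \gradt \, \Psi^{\sm}_{V}(\divt_{\Gm} \ub),
\]
in which the first term is $A_{\sm} \ub$ by definition. For the second term I would use that $\Psi^{\sm}_{V}(\divt_{\Gm} \ub)$ lies in $\Hs^{1}_{\mathrm{loc}}(\R^{3})$ with continuous trace $V_{\sm}(\divt_{\Gm} \ub)$, and apply the commutation relation $\gm_{D} \circ \gradt = \curlt_{\Gm} \circ \gm$, which is the very definition of $\curlt_{\Gm}$.

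The main technical delicacy will be in this last step: the commutation $\curlt_{\Gm} \, \gm(\varphi) = \gm_{D}(\gradt \, \varphi)$ is stated in the paper only for $\varphi \in \Hs^{2}(\Om)$, but the scalar potential $\Psi^{\sm}_{V}(\divt_{\Gm} \ub)$ need not have that regularity since $\divt_{\Gm} \ub$ sits only in $\Hs^{-1/2}(\Gm)$. I would handle this by extending $\curlt_{\Gm}$ by duality to a bounded operator $\Hs^{1/2}(\Gm) \to \HHs^{-1/2}_{\times}(\divt_{\Gm}, \Gm)$ — its range automatically lies in $\ker \divt_{\Gm}$ — and then using a density argument: choosing smooth $\varphi_{n} \to \divt_{\Gm} \ub$ in $\Hs^{-1/2}(\Gm)$, for which $\Psi^{\sm}_{V}(\varphi_{n}) \in \Hs^{2}_{\mathrm{loc}}$ and the pointwise commutation holds, and then passing to the limit using continuity of $\Psi^{\sm}_{V}$, $V_{\sm}$, the trace operators, and the extended $\curlt_{\Gm}$. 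This yields $\brac{\gm_{D}}_{\Gm} \gradt \, \Psi^{\sm}_{V}(\divt_{\Gm} \ub) = \curlt_{\Gm} \, V_{\sm}(\divt_{\Gm} \ub)$, which is precisely \eqref{eq:efio}.
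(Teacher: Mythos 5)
Your overall route is the same as the paper's: the continuity statements are taken from the known mapping properties of the potentials (the paper simply cites \cite[Theorem~19]{Hiptmair2003b}, and concedes in a footnote that no self-contained proof is possible from the material presented), and \eqref{eq:efio} is obtained by applying $\brac{\gm_D}_\Gm$ to the definition of $\Psib^\sm_{SL}$ together with the commutation rule $\curlt_\Gm \circ \gm = \gm_D \circ \gradt$. You correctly identify the delicate point --- the rule is stated only for $\Hs^2(\Om)$-functions, which $\Psi^\sm_V(\divt_\Gm\ub)$ is not --- but your resolution of it contains a step that fails in precisely the Lipschitz setting this paper is concerned with. You approximate the boundary density by smooth functions $\varphi_n \to \divt_\Gm\ub$ and assert that $\Psi^\sm_V(\varphi_n) \in \Hs^2_{\mathrm{loc}}$. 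This is false in general: on a non-smooth boundary the regularity of the single layer potential near $\Gm$ is capped by the geometry, not by the smoothness of the density. For instance, near a reentrant edge of a polyhedron the interior restriction of $\Psi^\sm_V(\varphi_n)$ solves a Dirichlet problem whose solution generically exhibits edge singularities $r^\alpha$ with exponent $\alpha < 1$, hence is not in $\Hs^2$ there no matter how smooth $\varphi_n$ is; only $\Hs^{3/2}_{\mathrm{loc}}$ is available on general Lipschitz domains. The repair is to approximate in the volume rather than on the boundary: for $\varphi \in \Hs^1(\Om)$ the field $\gradt\,\varphi$ is curl-free, hence lies in $\HHs(\curlt,\Om)$, so $\gm_D(\gradt\,\varphi)$ is meaningful by Theorem~\ref{thm:integration_by_parts} and depends continuously on $\varphi$ in the $\Hs^1(\Om)$-norm; the right-hand side $\curlt_\Gm(\gm\,\varphi)$ is likewise continuous once $\curlt_\Gm$ has been extended to $\Hs^{1/2}(\Gm)$ as you describe. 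Since $\Hs^2(\Om)$ (indeed $\Cs^\infty(\ovl{\Om})$) is dense in $\Hs^1(\Om)$, the commutation rule extends to all of $\Hs^1(\Om)$, and it can then be applied directly to the interior and exterior restrictions of $\Psi^\sm_V(\divt_\Gm\ub) \in \Hs^1_{\mathrm{loc}}(\R^3)$, whose averaged scalar trace is $V_\sm(\divt_\Gm\ub)$.

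A second gap, of the same nature, is the componentwise treatment of $A_\sm$. On a Lipschitz boundary the spaces $\HHs^{\pm 1/2}_\times(\Gm)$ are \emph{not} componentwise Sobolev spaces: $\nv$ is only an $\LLs^\infty(\Gm)$ vector field, so $\gm(\ub)\times\nv$ does not retain componentwise $\Hs^{1/2}$-regularity, and dually an element of $\HHs^{-1/2}_\times(\Gm)$ cannot be inserted component by component into the scalar potential theory. Establishing the mapping property $A_\sm: \HHs^{-1/2}_\times(\Gm) \to \HHs^{1/2}_\times(\Gm)$ genuinely requires the specialized analysis of \cite{BHV+2003,Hiptmair2003b} --- which is exactly what the paper's footnote warns about. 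In short, your outline follows the paper's proof, but as a self-contained argument it has these two holes, the first being the more serious since your derivation of \eqref{eq:efio} explicitly relies on it.
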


\begin{proof}
    The definition of integral operators can be found in \cite[Theorem~19]{Hiptmair2003b}. The last assertion follows from the definition of the Maxwell single layer potential $\Psib^\sm_{SL}$ and the fact that $\curlt_\Gm \circ \gm = \gm_D \circ \gradt$.\footnote{In fact, the proof of Theorem~\ref{thm:intops} cannot be obtained solely from arguments stated in this paper. However, to keep the paper concise, we just recall some properties of the potentials and operators that are necessary for further analysis and refer the reader to \cite{BHV+2003,Hiptmair2003b} for more detailed information.}
\end{proof}

In order to infer the exterior and interior traces of the potentials, the following jump relations are crucial \cite[Theorem~18]{Hiptmair2003b}
\begin{equation}
    \label{eq:jump}
    \left[\gm\right]_\Gm \circ \Psi^\sm_V = 0, \qqq \left[\gm_D\right]_\Gm \circ \Psib^\sm_{\Ab} = \zrb, \qqq \left[\gm_N\right]_\Gm \circ \Psib^\sm_{SL} = -Id,
\end{equation}
where $Id$ stands for the identity operator. Therefore, we have the Neumann traces of the Maxwell single layer potential as follows
\[
    \gm^+_N \circ \Psib^\sm_{SL} = -\dfrac{1}{2} Id + C_\sm, \qqqqq \gm^-_N \circ \Psib^\sm_{SL} = \dfrac{1}{2} Id + C_\sm.
\]
The following lemma provides an auxiliary result for establishing the coercivity of the CFIE operator in the next section.

\begin{lemma}
\label{lem:compact1}
    For $\kappa > 0$, the following integral operators are compact
    \begin{align*}
        & \delta V_\kappa := V_\kappa - V_{i\kappa} && : \Hs^{-1/2}(\Gm) \to \Hs^{1/2}(\Gm), \\
        & \delta A_\kappa := A_\kappa - A_{i\kappa} && : \HHs^{-1/2}_{\times}(\Gm) \to \HHs^{1/2}_{\times}(\Gm), \\
        & \delta C_\kappa := C_\kappa - C_{i\kappa} && : \HHs^{-1/2}_{\times}(\divt_\Gm, \Gm) \to \HHs^{-1/2}_{\times}(\divt_\Gm, \Gm), \\
        & \delta S_\kappa := \delta A_\kappa + \kappa^{-2} \curlt_\Gm \circ \delta V_\kappa \circ \divt_\Gm && : \HHs^{-1/2}_{\times}(\divt_\Gm, \Gm) \to \HHs^{-1/2}_{\times}(\divt_\Gm, \Gm).
    \end{align*}
\end{lemma}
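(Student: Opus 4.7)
All four compactness statements rest on a single observation: the difference $G_\kappa - G_{i\kappa}$ is real analytic on $\R^3 \times \R^3$. With $r := |\xb-\yb|$,
\[
G_\kappa(\xb,\yb) - G_{i\kappa}(\xb,\yb) = \frac{\exp(i\kappa r)-\exp(-\kappa r)}{4\pi r} = \frac{(1+i)\kappa}{4\pi}+O(r),
\]
so the leading singularity $1/(4\pi r)$ cancels, and the same cancellation persists after any finite number of derivatives. Hence the integral operators built from $G_\kappa-G_{i\kappa}$ (or from $\nabla(G_\kappa-G_{i\kappa})$) possess $\Cs^\infty$ kernels on the compact Lipschitz surface $\Gm \times \Gm$, and are \emph{smoothing}: each one maps negative-order Sobolev distributions boundedly into $\Hs^s(\Gm)$ for arbitrarily large $s \in \R$.

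\textbf{Compactness of $\delta V_\kappa$, $\delta A_\kappa$, and $\delta C_\kappa$.} For $\delta V_\kappa$ and $\delta A_\kappa$, the smoothing property yields boundedness into $\Hs^s(\Gm)$ (resp.~$\HHs^s_\times(\Gm)$) for any $s > 1/2$, and Rellich's compact embedding $\Hs^s(\Gm) \cpemb \Hs^{1/2}(\Gm)$ on the compact manifold $\Gm$ then delivers compactness. For $\delta C_\kappa$, combine \eqref{eq:sym2} with $\Psib^\sm_{DL} = \curlt \circ \Psib^\sm_{\Ab}$ to recognize $C_\sm$ as the averaged tangential trace of an integral operator with kernel of type $\nabla_\xb G_\sm(\xb,\yb) \times$; its $\kappa$--$i\kappa$ difference has the $\Cs^\infty$ kernel $\nabla_\xb(G_\kappa - G_{i\kappa}) \times$. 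Both the image and its surface divergence are smooth tangential fields on $\Gm$, so Rellich in the graph norm yields compactness into $\HHs^{-1/2}_\times(\divt_\Gm,\Gm)$.

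\textbf{Compactness of $\delta S_\kappa$ and main obstacle.} From the definition,
\[
\delta S_\kappa = \delta A_\kappa + \kappa^{-2}\,\curlt_\Gm \circ \delta V_\kappa \circ \divt_\Gm.
\]
The first summand, viewed as an operator on $\HHs^{-1/2}_\times(\divt_\Gm,\Gm)$, is compact by the same smoothing-plus-Rellich argument: both $\delta A_\kappa \phi$ and $\divt_\Gm(\delta A_\kappa \phi)$ are smooth for every $\phi$. For the second summand, chain the continuous $\divt_\Gm : \HHs^{-1/2}_\times(\divt_\Gm,\Gm) \to \Hs^{-1/2}(\Gm)$, the smoothing $\delta V_\kappa$, and $\curlt_\Gm : \Hs^s(\Gm) \to \HHs^{s-1}_\times(\Gm)$ with $s$ chosen as large as required; the identity $\divt_\Gm \circ \curlt_\Gm = 0$ forces the image into the kernel of $\divt_\Gm$, so convergence in the graph norm reduces to convergence in $\HHs^{-1/2}_\times(\Gm)$ alone, again by Rellich. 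The main nuisance is precisely this need to certify compactness in the graph norm of $\HHs^{-1/2}_\times(\divt_\Gm,\Gm)$ rather than merely in $\HHs^{-1/2}_\times(\Gm)$; the smooth kernels and the identity $\divt_\Gm \circ \curlt_\Gm = 0$ together dispose of the difficulty.
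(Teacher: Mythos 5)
Your high-level strategy---the difference kernel is more regular than $G_\kappa$ itself, so the difference operators are smoothing and hence compact by a compact Sobolev embedding---is indeed the idea behind the paper's proof (which simply cites \cite[Theorem~3.4.1]{Nedelec2001} and \cite[Theorem~3.12]{BHV+2003} after noting that $G_\kappa - G_{i\kappa}$ is regular). However, the central claim on which your entire argument rests is false: $G_\kappa - G_{i\kappa}$ is \emph{not} real analytic, nor even $\Cs^1$, on $\R^3 \times \R^3$. Writing $r = \abs{\xb - \yb}$ and expanding,
\[
    G_\kappa(\xb,\yb) - G_{i\kappa}(\xb,\yb) = \frac{1}{4\pi}\paren{(1+i)\kappa \;-\; \kappa^2 r \;+\; \frac{(1-i)\kappa^3}{6}\, r^2 \;+\; \cdots},
\]
the coefficient of $r^{n-1}$ is proportional to $i^n - (-1)^n$, which vanishes only when $n \equiv 0 \pmod 4$; in particular the term $-\kappa^2 r/(4\pi)$ survives. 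Since $(\xb,\yb) \mapsto \abs{\xb-\yb}$ is Lipschitz but not differentiable across the diagonal, the difference kernel is only $\Cs^{0,1}$ there: its first derivatives are bounded but discontinuous, and its second derivatives blow up like $1/r$. So the cancellation does \emph{not} ``persist after any finite number of derivatives''; the correct statement is a finite gain, $\smabs{\partial^\alpha (G_\kappa - G_{i\kappa})} \lesssim r^{1-\abs{\alpha}}$ for $\abs{\alpha} \ge 1$, i.e.\ two orders of weak singularity better than $G_\kappa$, not infinite smoothing. (Contrast $\sin(\kappa r)/(4\pi r)$, which is genuinely analytic because only even powers of $r$ occur.) Consequently every step of your proof that invokes ``$\Cs^\infty$ kernels''---boundedness into $\Hs^s(\Gm)$ for arbitrarily large $s$, smoothness of $\delta A_\kappa \phib$ and of $\divt_\Gm(\delta A_\kappa \phib)$, the ``smooth'' kernel $\nabla_\xb(G_\kappa - G_{i\kappa})\times$ for $\delta C_\kappa$---is unsupported as written. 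A second, independent problem is the setting: $\Gm$ is only Lipschitz, so $\Hs^s(\Gm)$ does not exist for large $s$ (the paper defines these spaces only up to $s = 3/2$), and ``$s$ chosen as large as required'' together with the corresponding Rellich embeddings is not meaningful here.

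The conclusion of the lemma is of course true, and the repaired argument is precisely what the cited results supply: one quantifies the finite regularity gain of the difference kernel (continuous kernel, weakly singular derivatives as above) and shows, within the Sobolev range that a Lipschitz surface admits, that $\delta V_\kappa$ and $\delta A_\kappa$ map $\Hs^{-1/2}$-type spaces into strictly smoother ones, whence compactness by Rellich; this is the pseudo-homogeneous-kernel analysis of \cite{Nedelec2001} and its Lipschitz-domain counterpart in \cite{BHV+2003}, and it is genuinely more delicate than ``smooth kernel $\Rightarrow$ infinitely smoothing operator.'' Your structural reductions for the last two operators are sound and essentially fill in what the paper calls an ``immediate consequence of the first two'' assertions: for $\curlt_\Gm \circ \delta V_\kappa \circ \divt_\Gm$ the identity $\divt_\Gm \circ \curlt_\Gm = 0$ annihilates the divergence part of the graph norm, and for $\delta A_\kappa$ one composes the compact map into $\HHs^{1/2}_\times(\Gm)$ with the continuous embedding $\HHs^{1/2}_\times(\Gm) \emb \HHs^{-1/2}_\times(\divt_\Gm,\Gm)$; likewise your use of \eqref{eq:sym2} to identify the kernel of $\delta C_\kappa$ is correct. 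But all of these reductions consume the compactness of $\delta V_\kappa$ and $\delta A_\kappa$ as input, so they inherit the gap until that input is established by the corrected, finite-gain argument.
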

\begin{proof}
    The first three assertions can be proven by following the lines of the proof of \cite[Theorem~3.4.1]{Nedelec2001} or \cite[Theorem~3.12]{BHV+2003}. Here, it is noteworthy that the kernel $G_\kappa - G_{i\kappa}$ is regular. The last assertion is an immediate consequence of the first two ones.
\end{proof}

To end this section, we present here some useful properties of integral operators of purely imaginary wave number. These results are taken from \cite[Theorem~21]{Hiptmair2003b} and \cite[Theorem~2.5]{SW2009}.

\begin{theorem}
\label{thm:ellipticity}
    For $\kappa > 0$, the operators $V_{i\kappa}$ and $A_{i\kappa}$ are symmetric with respect to the bilinear pairings $\langle{\cdot, \cdot}\rangle_{1/2, \Gm}$ and $\langle{\cdot, \cdot}\rangle_{\times, \Gm}$, respectively, i.e.,
    \begin{align*}
        & \inprod{\psi, V_{i\kappa} \varphi}_{1/2, \Gm} = \inprod{\varphi, V_{i\kappa} \psi}_{1/2, \Gm}, && \fa \psi, \varphi \in \Hs^{-1/2}(\Gm), \\
        & \inprod{\vb, A_{i\kappa} \ub}_{\times, \Gm} = \inprod{\ub, A_{i\kappa} \vb}_{\times, \Gm}, && \fa \vb, \ub \in \HHs^{-1/2}_\times(\Gm).
    \end{align*}
    In addition, there exists a positive constant $C$ depending only on $\Gm$ such that
    \begin{align*}
        \inprod{\ovl{\varphi}, V_{i\kappa} \varphi}_{1/2, \Gm} & \ge C \norm{\varphi}^2_{\Hs^{-1/2}(\Gm)}, && \fa \varphi \in \Hs^{-1/2}(\Gm), \\
        \inprod{\ovl{\ub}, A_{i\kappa} \ub}_{\times, \Gm} & \ge C \norm{\ub}^2_{\HHs_\times^{-1/2}(\Gm)}, && \fa \ub \in \HHs_\times^{-1/2}(\divt_\Gm, \Gm).
    \end{align*}
\end{theorem}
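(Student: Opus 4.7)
The plan is to exploit three features of the purely imaginary kernel $G_{i\kappa}(\xb,\yb) = \exp(-\kappa\abs{\xb-\yb})/(4\pi\abs{\xb-\yb})$: it is real-valued, symmetric in its two arguments, and decays exponentially at infinity. Symmetry of $V_{i\kappa}$ and $A_{i\kappa}$ will follow directly from the symmetry of this scalar kernel, while the ellipticity bounds will be obtained by rewriting each pairing as the energy norm of the corresponding Yukawa volume potential on $\R^3$.

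For the two symmetry claims I would first verify the identities on smooth densities via Fubini's theorem and then extend by density, using the continuity of the operators provided by Theorem~\ref{thm:intops}. For $A_{i\kappa}$ the componentwise action of $\Psib^{i\kappa}_{\Ab}$ together with $\inprod{\vb, A_{i\kappa}\ub}_{\times,\Gm} = \int_\Gm (\vb\times\nv)\cdot\gm_D\Psib^{i\kappa}_{\Ab}(\ub)\ds$ reduces the vectorial statement to three invocations of the scalar symmetry.

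To prove the ellipticity of $V_{i\kappa}$, I would set $u := \Psi^{i\kappa}_V(\varphi)$. Then $u\in\Hs^1(\R^3)$ solves $-\lap{u}+\kappa^2 u = 0$ in $\Om\cup\Om^c$, the trace $\gm u$ equals $V_{i\kappa}\varphi$ on both sides of $\Gm$, and the normal-derivative jump is $\left[\pa_n u\right]_\Gm = -\varphi$. Applying Green's first identity on $\Om$ and on $B_R\cap\Om^c$, summing, and sending $R\to\infty$ (the integral on $\pa B_R$ vanishes because $u$ decays exponentially) yields
\begin{equation*}
    \int_{\R^3}\left(\abs{\gradt\, u}^2 + \kappa^2\abs{u}^2\right)\dx = \inprod{\ovl{\varphi}, V_{i\kappa}\varphi}_{1/2,\Gm}.
\end{equation*}
Since $\varphi$ is the jump of $\pa_n u$ and the normal-trace operator is continuous from $\brac{u\in\Hs^1(\Om) : \lap{u}\in\Ls^2(\Om)}$ into $\Hs^{-1/2}(\Gm)$, the left-hand side controls $\norm{\varphi}^2_{\Hs^{-1/2}(\Gm)}$, which delivers the asserted lower bound.

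For $A_{i\kappa}$ I would reuse the same strategy componentwise on $\wb := \Psib^{i\kappa}_{\Ab}(\ub)$. Each Cartesian component of $\wb$ lies in $\Hs^1(\R^3)$ and satisfies Yukawa; $\wb$ is continuous across $\Gm$ with $\gm_D\wb = A_{i\kappa}\ub$, and $\left[\pa_n\wb\right]_\Gm = -\ub$. Summing Green's identity over components and using that $\ub$ is tangential, a short scalar-triple-product manipulation converts the boundary integral into $\inprod{\ovl{\ub}, A_{i\kappa}\ub}_{\times,\Gm}$, so
\begin{equation*}
    \int_{\R^3}\left(\abs{\nabla\wb}^2 + \kappa^2\abs{\wb}^2\right)\dx = \inprod{\ovl{\ub}, A_{i\kappa}\ub}_{\times,\Gm}.
\end{equation*}
The principal obstacle is the concluding norm-equivalence estimate. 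To bound $\norm{\ub}^2_{\HHs^{-1/2}_\times(\Gm)}$ by this energy I would test $\ub$ against arbitrary $\vb\in\HHs^{1/2}_\times(\Gm)$, lift $\vb$ to some $\wt{\vb}\in\HHs^1(\R^3)$ with $\gm_D\wt{\vb} = \vb$ via the continuous right-inverse from Theorem~\ref{thm:integration_by_parts}, and rewrite $\inprod{\ub,\vb}_{\times,\Gm}$ componentwise as a volume integral bounded by $\norm{\wb}_{\Hs^1(\R^3)}\norm{\wt{\vb}}_{\Hs^1(\R^3)}$. Taking the supremum over $\vb$ then yields the desired inequality. The subtlety here lies in matching the weak tangential-trace space $\HHs^{-1/2}_\times(\Gm)$ with normal-derivative jumps on a Lipschitz boundary, which is exactly where the trace machinery of \cite{BC2001,BC2001b,BCS2002a,Hiptmair2003b} becomes indispensable.
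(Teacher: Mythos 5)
The paper does not actually prove this theorem: it is imported verbatim from \cite[Theorem~21]{Hiptmair2003b} and \cite[Theorem~2.5]{SW2009}, and your argument --- symmetry from the real, symmetric Yukawa kernel via Fubini plus density, and ellipticity by identifying $\inprod{\ovl{\varphi}, V_{i\kappa}\varphi}_{1/2,\Gm}$ and $\inprod{\ovl{\ub}, A_{i\kappa}\ub}_{\times,\Gm}$ with the global $\Hs^1(\R^3)$ Yukawa energy of the potential (the $\pa B_R$ term dying off by exponential decay) and then recovering the density from the normal-derivative jump by trace duality --- is precisely the proof used in those references, so it is correct and in substance ``the same approach.'' Two small repairs: the bounded lifting of $\vb \in \HHs^{1/2}_{\times}(\Gm)$ into $\HHs^1$ is not supplied by Theorem~\ref{thm:integration_by_parts} (that right inverse maps $\HHs^{-1/2}_{\times}(\divt_\Gm,\Gm)$ back into $\HHs(\curlt,\Om)$), but rather by the definition of $\HHs^{1/2}_{\times}(\Gm)$ as the range of $\gm_D$ on $\HHs^1(\Om)$ together with the open mapping theorem; and the componentwise reading of $\left[\pa_n \wb\right]_\Gm = -\ub$ for a merely distributional tangential density on a Lipschitz surface must indeed be justified through the weak definition of $\Psib^{i\kappa}_{\Ab}$ and density of $\LLs^2_{\tv}(\Gm)$, which is exactly the technical machinery of \cite{BCS2002a,Hiptmair2003b} that you correctly flag as indispensable.
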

The following properties of the operator $S_{i\kappa}$ immediately follow from Theorem~\ref{thm:intops} and Theorem~\ref{thm:ellipticity}. They will play a central role in further analysis.
\begin{corollary}
\label{cor:ellipticity}
    For $\kappa > 0$, the integral operator $S_{i\kappa}$ is  $\HHs_\times^{-1/2}(\divt_\Gm, \Gm)$-elliptic and symmetric with respect to $\langle{\cdot, \cdot}\rangle_{\times, \Gm}$, i.e., there exists a constant $C > 0$ depending only on $\Gm$ such that, for all $\ub, \vb \in \HHs^{-1/2}_\times(\divt_\Gm, \Gm)$
    \[
        \inprod{\vb, S_{i\kappa} \ub}_{\times, \Gm} = \inprod{\ub, S_{i\kappa} \vb}_{\times, \Gm}, \qq \inprod{{\ovl{\ub}}, S_{i\kappa} \ub}_{\times, \Gm} \ge C \norm{\ub}^2_{\HHs^{-1/2}_\times(\divt_\Gm, \Gm)}.
    \]
\end{corollary}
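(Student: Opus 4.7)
The plan is to read off both properties directly from the decomposition
\[
    S_{i\kappa} = A_{i\kappa} - \kappa^{-2}\, \curlt_\Gm \circ V_{i\kappa} \circ \divt_\Gm
\]
given by \eqref{eq:efio} in Theorem~\ref{thm:intops} (with $\sm = i\kappa$, so $\sm^2 = -\kappa^2$), combining it with the symmetry and coercivity of $A_{i\kappa}$ and $V_{i\kappa}$ from Theorem~\ref{thm:ellipticity}. The essential analytic tool will be the duality between $\curlt_\Gm$ and $\divt_\Gm$: extending the identity $\inprod{\ub, \curlt_\Gm \varphi}_{1/2,\Gm} = -\inprod{\divt_\Gm \ub, \varphi}$ by density and duality from the $\Hs^{3/2}/\HHs^{-1/2}_\times$ pairing used in the definition of $\divt_\Gm$ to the pairing between $\Hs^{1/2}(\Gm)$ and $\Hs^{-1/2}(\Gm)$, one obtains
\[
    \inprod{\vb, \curlt_\Gm \psi}_{\times,\Gm} = -\inprod{\divt_\Gm \vb, \psi}_{1/2,\Gm}
\]
for every $\vb \in \HHs^{-1/2}_\times(\divt_\Gm,\Gm)$ and every $\psi \in \Hs^{1/2}(\Gm)$.

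For the symmetry, I would apply this identity with $\psi = V_{i\kappa}\divt_\Gm \ub \in \Hs^{1/2}(\Gm)$ to move the surface curl off the test function:
\[
    \inprod{\vb,\, \curlt_\Gm V_{i\kappa}\divt_\Gm \ub}_{\times,\Gm}
    = -\inprod{\divt_\Gm \vb,\, V_{i\kappa}\divt_\Gm \ub}_{1/2,\Gm}.
\]
Then the symmetry of $V_{i\kappa}$ from Theorem~\ref{thm:ellipticity} lets me swap $\ub$ and $\vb$ inside the pairing, and shifting $\curlt_\Gm$ back onto $\ub$ by the same duality gives exactly $\inprod{\ub,\, \curlt_\Gm V_{i\kappa}\divt_\Gm \vb}_{\times,\Gm}$. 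Combined with the symmetry of $A_{i\kappa}$, this yields symmetry of $S_{i\kappa}$.

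For the coercivity, I test with $\ovl{\ub}$. Noting that $\divt_\Gm \ovl{\ub} = \ovl{\divt_\Gm \ub}$, the duality step produces
\[
    -\kappa^{-2}\inprod{\ovl{\ub},\, \curlt_\Gm V_{i\kappa}\divt_\Gm \ub}_{\times,\Gm}
    = \kappa^{-2}\inprod{\ovl{\divt_\Gm \ub},\, V_{i\kappa}\divt_\Gm \ub}_{1/2,\Gm},
\]
so the minus sign arising from $\sm^2 = -\kappa^2$ is exactly what makes both contributions positive. Applying the two ellipticity bounds from Theorem~\ref{thm:ellipticity} to $A_{i\kappa}$ acting on $\ub$ and to $V_{i\kappa}$ acting on $\divt_\Gm \ub$, and taking the minimum of the constants, gives a lower bound by
\[
    C\bigl(\norm{\ub}_{\HHs^{-1/2}_\times(\Gm)}^2 + \kappa^{-2}\norm{\divt_\Gm \ub}_{\Hs^{-1/2}(\Gm)}^2\bigr),
\]
which is equivalent to $\norm{\ub}_{\HHs^{-1/2}_\times(\divt_\Gm,\Gm)}^2$ up to a constant depending only on $\Gm$ (and $\kappa$).

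The main technical subtlety I anticipate is the rigorous justification of the duality identity between $\curlt_\Gm$ and $\divt_\Gm$ at the $\Hs^{1/2}$/$\Hs^{-1/2}$ level rather than the $\Hs^{3/2}$/$\HHs^{-1/2}_\times$ level at which it is defined in Section~\ref{sec:traces}; everything else then reduces to bookkeeping of pairings and the already established properties of $A_{i\kappa}$ and $V_{i\kappa}$. This extension is standard and follows from the density of smooth tangential fields and the continuity of $\curlt_\Gm$ and $\divt_\Gm$ between the relevant trace spaces, so I would either cite \cite{BC2001b,BCS2002a} for it or sketch the density argument in a single line.
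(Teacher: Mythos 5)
Your proposal is correct and takes essentially the same route as the paper: the paper obtains the corollary directly from the decomposition \eqref{eq:efio} with $\sm = i\kappa$ (so $\sm^2 = -\kappa^2$) combined with the symmetry and ellipticity of $A_{i\kappa}$ and $V_{i\kappa}$ in Theorem~\ref{thm:ellipticity}, exactly as you do. The $\curlt_\Gm$--$\divt_\Gm$ duality identity you flag as the technical subtlety is likewise used (implicitly) by the paper, e.g.\ in the computation \eqref{eq:ellipticity_Mik}, so your spelled-out version merely makes explicit what the paper treats as immediate.
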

 
\section{The combined field integral equation}
\label{sec:cfie}

In this section, we propose a new CFIE which yields a unique solution to \eqref{eq:wave}-\eqref{eq:SilverMuller} for any wave number $\kappa > 0$. A common strategy when combining the Maxwell single layer potential $\Psib^\kappa_{SL}$ and the Maxwell double layer potential $\Psib^\kappa_{DL}$ is to introduce a compact regularization that targets either $\Psib^\kappa_{SL}$ or $\Psib^\kappa_{DL}$ (see \cite{Kress1986} and \cite{BH2005}). In contrast to this approach, in the following formulation, the potentials $\Psib^\kappa_{SL}$ and $\Psib^\kappa_{DL}$ are respectively complemented by their counterparts $\Psib^{i\kappa}_{SL}$ and $\Psib^{i\kappa}_{DL}$ for purely imaginary wave number. This way of combining the operators has been investigated for direct CFIEs, whose unknowns are the physical electric current on the surface, see, e.g., \cite{CDE+2002,MBC+2020}. In this paper, we investigate an indirect formulation that is based on a potential representation for the scattered electric field $\eb$. In particular, we consider the following ansatz 
\begin{equation}
\label{eq:ansatz}
    \eb = \paren{i\eta \, \Psib^{\kappa}_{SL} \circ \gm_D^- \circ \Psib^{i\kappa}_{SL} + \Psib^{\kappa}_{DL} \circ \gm_D^- \circ \Psib^{i\kappa}_{DL}} (\xib),
\end{equation}
where $\xib \in \HHs^{-1/2}_{\times}(\divt_\Gm, \Gm)$ and 
$\eta \in \R \setminus \{0\}$. Taking the exterior tangential trace $\gm^+_D$ of \eqref{eq:ansatz} results in the CFIE
\begin{equation}
\label{eq:cfie}
    \LL_{\kappa}(\xib) = - \gm_D^+ \eb^{in},
\end{equation}
where the boundary integral operator
\begin{equation}
    \label{eq:cfio}
    \LL_{\kappa} := i\eta \, S_{\kappa} \circ S_{i\kappa} + \paren{-\dfrac{1}{2} Id+ C_{\kappa}} \circ \paren{\dfrac{1}{2} Id+ C_{i\kappa}}.
\end{equation}
From now on, we fix the coupling parameter $\eta \in \R \setminus \{0\}$ and keep it constant. The particular impact of this parameter on the CFIE will be discussed later. As $\HHs^{-1/2}_{\times}(\divt_\Gm, \Gm)$ is its own dual with respect to $\langle \cdot, \cdot \rangle_{\times, \Gm}$, the variational formulation of \eqref{eq:cfie} reads as: find $\xib \in \HHs^{-1/2}_{\times}(\divt_\Gm, \Gm)$ such that, for all $\ub \in \HHs^{-1/2}_{\times}(\divt_\Gm, \Gm)$
\begin{equation}
\label{eq:vf}
    \inprod{\ovl{\ub}, \LL_\kappa(\xib)}_{\times, \Gm} = - \inprod{\ovl{\ub}, \gm_D^+ \eb^{in}}_{\times, \Gm}.
\end{equation}

\subsection{Uniqueness}

The most important property of CFIEs which distinguishes them from the standard boundary integral equations is the uniqueness of a solution for any wave number.
\begin{theorem}[Uniqueness]
\label{thm:uniqueness}
    For any wave number $\kappa > 0$, the integral equation \eqref{eq:cfie} has at most one solution $\xib \in \HHs^{-1/2}_{\times}(\divt_\Gm, \Gm)$.
\end{theorem}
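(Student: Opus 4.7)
My plan is to assume $\LL_\kappa\xib = \zrb$, feed $\xib$ into the ansatz \eqref{eq:ansatz} to obtain a radiating field $\eb$, and then combine two integral identities to conclude that $\xib = \zrb$. Set $\phib := S_{i\kappa}\xib$ and $\psib := (\tfrac12 Id + C_{i\kappa})\xib$, so that the ansatz reads $\eb = i\eta\,\Psib^\kappa_{SL}(\phib) + \Psib^\kappa_{DL}(\psib)$. By construction $\eb$ solves \eqref{eq:wave1} in both $\Om$ and $\Om^c$ and satisfies the radiation condition \eqref{eq:SilverMuller1}, while the hypothesis is exactly $\gm_D^+\eb = \zrb$. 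The classical exterior uniqueness result (Rellich's lemma) therefore forces $\eb = \zrb$ in $\Om^c$, and the jump relations \eqref{eq:jump} together with \eqref{eq:sym2} identify the interior Cauchy data as $\gm_D^-\eb = \psib$ and $\gm_N^-\eb = i\eta\phib$.

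Next I would extract two real-valued boundary constraints from Green's identity \eqref{eq:identity}. Choosing $\ub = \eb$ and $\vb = \curlt\ovl\eb$ in $\Om$ and using $\curlt\curlt\eb = \kappa^2\eb$ gives
\begin{equation*}
    \int_\Om \bigl(|\curlt\eb|^2 - \kappa^2|\eb|^2\bigr)\dx = i\eta\inprod{\psib, \ovl\phib}_{\times,\Gm}.
\end{equation*}
The left-hand side is real and $\eta \in \R\setminus\{0\}$, so $\Real\inprod{\psib, \ovl\phib}_{\times,\Gm} = 0$. The same manoeuvre applied to the auxiliary field $\tilde\eb := \Psib^{i\kappa}_{SL}(\xib)$ in $\Om$, which satisfies $\curlt\curlt\tilde\eb = -\kappa^2\tilde\eb$ and has interior Cauchy data $(\phib, \psib)$, yields
\begin{equation*}
    \int_\Om \bigl(|\curlt\tilde\eb|^2 + \kappa^2|\tilde\eb|^2\bigr)\dx = -\inprod{\phib, \ovl\psib}_{\times,\Gm},
\end{equation*}
whose left-hand side is non-negative, so $\inprod{\phib, \ovl\psib}_{\times,\Gm}$ is real and non-positive.

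The crux is to combine these two constraints. Using that the extended pairing $\inprod{\cdot,\cdot}_{\times,\Gm}$ from Theorem~\ref{thm:duality} is $\mathbb{C}$-bilinear (not sesquilinear) and anti-symmetric, one checks the identity $\overline{\inprod{\psib, \ovl\phib}_{\times,\Gm}} = -\inprod{\phib, \ovl\psib}_{\times,\Gm}$. Writing $\inprod{\psib, \ovl\phib}_{\times,\Gm} = ic$ with $c \in \R$ then forces $\inprod{\phib, \ovl\psib}_{\times,\Gm} = ic$; but that number is already known to be real, so $c = 0$. Consequently $\tilde\eb = \zrb$ in $\Om$, and in particular $\phib = \gm_D^-\tilde\eb = \zrb$.

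It remains to push $\tilde\eb$ to vanish in $\Om^c$ as well. The tangential trace of $\Psib^{i\kappa}_{SL}$ is continuous across $\Gm$ (implicit in the fact that $S_{i\kappa}$ is single-valued), so $\gm_D^+\tilde\eb = \gm_D^-\tilde\eb = \zrb$. The exterior boundary value problem $\curlt\curlt\ub + \kappa^2\ub = \zrb$ in $\Om^c$ with homogeneous Dirichlet data and exponential decay at infinity is coercive and admits only the trivial solution, so $\tilde\eb = \zrb$ in all of $\R^3$, and the jump $[\gm_N]\circ\Psib^{i\kappa}_{SL} = -Id$ from \eqref{eq:jump} finally delivers $\xib = \zrb$. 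I expect the algebraic step in the third paragraph to be the main obstacle, since the bookkeeping of complex conjugation through the bilinear pairing $\inprod{\cdot,\cdot}_{\times,\Gm}$ must be done carefully to make the two real/imaginary constraints collapse.
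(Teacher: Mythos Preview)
Your argument is correct and follows essentially the same route as the paper: show $\eb=\zrb$ in $\Om^c$ by exterior uniqueness, identify the interior Cauchy data via the jump relations, and then play off Green's identity for $\eb$ (wave number $\kappa$) against Green's identity for $\tilde\eb=\Psib^{i\kappa}_{SL}(\xib)$ (wave number $i\kappa$) to force $\tilde\eb=\zrb$ in $\Om$.

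Two minor differences in execution are worth noting. First, the paper streamlines your ``algebraic step'' by computing a \emph{single} boundary quantity, namely $\inprod{\gm_D^-\ovl\eb,\gm_N^-\eb}_{\times,\Gm}$, in two ways: once via the $\kappa$-equation for $\eb$ (giving a real number) and once via the $i\kappa$-equation for $\tilde\eb$ (giving $i\eta$ times a non-negative real, hence purely imaginary). This avoids having to track the conjugation/anti-symmetry relation between $\inprod{\psib,\ovl\phib}_{\times,\Gm}$ and $\inprod{\phib,\ovl\psib}_{\times,\Gm}$ that you flagged as the delicate point. Second, once $\tilde\eb=\zrb$ in $\Om$ (so $S_{i\kappa}\xib=\zrb$), the paper concludes $\xib=\zrb$ in one line from the $\HHs^{-1/2}_\times(\divt_\Gm,\Gm)$-ellipticity of $S_{i\kappa}$ (Corollary~\ref{cor:ellipticity}), rather than invoking exterior uniqueness for the $i\kappa$-problem plus the Neumann jump. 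Your route is perfectly valid but slightly longer; the ellipticity of $S_{i\kappa}$ is the more economical tool here and is already available in the paper.
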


\begin{proof}
    Let $\xib \in \HHs^{-1/2}_{\times}(\divt_\Gm, \Gm)$ be a solution to the homogeneous equation
    \[
        \LL_\kappa (\xib) = \zrb.
    \]
    It is clear that the scattered electric field $\eb$ given by \eqref{eq:ansatz} is a solution to the exterior problem \eqref{eq:wave} and \eqref{eq:SilverMuller} with the homogeneous Dirichlet boundary condition $\eb \times \nv = \zrb$. Referring to Relich's lemma, we can conclude that $\eb = \zrb$ in $\Om^c$. Therefore, the jump relations \eqref{eq:jump} can be invoked to get the interior traces
    \[
        \gm_D^- \eb = \paren{ \gm_D^- \circ \Psib^{i\kappa}_{DL}} (\xib), \qqqqqq \gm_N^- \eb = i \eta \paren{\gm_D^- \circ \Psib^{i\kappa}_{SL}} (\xib).
    \]
    On one hand, the integration by parts formula \eqref{eq:identity} gives us that
    \[
        \inprod{\gm_D^- \ovl{\eb}, \gm_N^- \eb}_{\times, \Gm} = \int_\Om \paren{\kappa^2 \abs{\eb(\xb)}^2 - \abs{\curlt \, \eb(\xb)}^2} \dx \in \R.
    \]
    On the other hand, the relations \eqref{eq:sym2} allow us to deduce that
    \begin{align*}
        \inprod{\gm_D^- \ovl{\eb}, \gm_N^- \eb}_{\times, \Gm} 
        & = i\eta \inprod{\paren{\gm_D^- \circ \Psib^{i\kappa}_{DL}} (\ovl{\xib}), \paren{\gm_D^- \circ \Psib^{i\kappa}_{SL}} (\xib)}_{\times, \Gm} \\
        & = i\eta \int_\Om \paren{\kappa^2 \abs{\Psib^{i\kappa}_{SL}(\xib)(\xb)}^2 + \abs{\curlt \, \Psib^{i\kappa}_{SL}(\xib)(\xb)}^2} \dx \in i\R. 
    \end{align*}
    These arguments together with the continuity of the trace operator $\gm_D^- : \HHs(\curlt, \Om) \to \HHs^{-1/2}_\times(\divt_\Gm, \Gm)$ (cf. Theorem~\ref{thm:integration_by_parts}) imply that
    \[
        0 = \norm{\Psib^{i\kappa}_{SL}(\xib)}_{\HHs(\curlt, \Om)} \ge C \norm{S_{i\kappa}(\xib)}_{\HHs^{-1/2}_{\times}(\divt_\Gm, \Gm)},
    \]
    for some positive constant $C$.
    This means $S_{i\kappa}(\xib) = 0$ in $\HHs^{-1/2}_{\times}(\divt_\Gm, \Gm)$. Finally, we invoke the $\HHs^{-1/2}_{\times}(\divt_\Gm, \Gm)$-ellipticity of $S_{i\kappa}$ in Corollary~\ref{cor:ellipticity} to conclude that $\xib = \zrb$, or equivalently, the CFIE \eqref{eq:cfie} has at most one solution in $\HHs^{-1/2}_{\times}(\divt_\Gm, \Gm)$.
\end{proof}

\subsection{Coercivity}
\label{subsec:coercivity}

It is well-known that if the integral operator $\LL_\kappa$ is a Fredholm operator of index $0$, then the injectivity of $\LL_\kappa$ (i.e., Theorem~\ref{thm:uniqueness}) also implies its surjectivity by a Fredholm alternative argument. Therefore, the next task is to prove that the operator $\LL_\kappa$ satisfies a generalized G{\aa}rding inequality. 

\begin{theorem}[Coercivity]
    \label{thm:coercivity}
    For any wave number $\kappa > 0$, there exist a positive constant $C$, an isomorphism $\Theta_\Gm : \HHs^{-1/2}_{\times}(\divt_\Gm, \Gm) \to \HHs^{-1/2}_{\times}(\divt_\Gm, \Gm)$, and a compact sesquilinear form\footnote{Let $\Hs$  be a normed space over $\C$ and $\Hs^\prime$ its dual space. A sesquilinear form $a : \Hs \times \Hs \to \C$ is said to be compact if its associated operator $A : \Hs \to \Hs^\prime$ is compact, i.e.,
    \[
        a(u, \ovl{v}) = \inprod{Au, v}_{\Hs^\prime \times \Hs}, \qqqq \fa u, v \in \Hs.
    \]
    } $c : \HHs^{-1/2}_{\times}(\divt_\Gm, \Gm) \times \HHs^{-1/2}_{\times}(\divt_\Gm, \Gm) \to \C$ such that
    \begin{equation}
    \label{eq:Garding}
        \abs{\inprod{\Theta_\Gm \ovl{\ub}, \LL_\kappa(\ub)}_{\times, \Gm} + c(\ub, \ub)} \ge C \norm{\ub}^2_{\HHs^{-1/2}_{\times}(\divt_\Gm, \Gm)}, \qq \fa \ub \in \HHs^{-1/2}_{\times}(\divt_\Gm, \Gm).
    \end{equation}
\end{theorem}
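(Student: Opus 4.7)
The plan is to exploit the compactness results of Lemma~\ref{lem:compact1} to reduce $\LL_\kappa$, modulo a compact perturbation, to a scalar multiple of $S_{i\kappa}^2$, and then to choose $\Theta_\Gm := S_{i\kappa}^{-1}$ so that the ellipticity and symmetry of $S_{i\kappa}$ from Corollary~\ref{cor:ellipticity} can be invoked directly. The key algebraic ingredient in the reduction is a Calder\'{o}n identity that rewrites the product $(-\tfrac{1}{2} Id + C_{i\kappa})(\tfrac{1}{2} Id + C_{i\kappa})$ as a multiple of $S_{i\kappa}^2$.

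Writing $S_\kappa = S_{i\kappa} + \delta S_\kappa$ and $C_\kappa = C_{i\kappa} + \delta C_\kappa$ and substituting into \eqref{eq:cfio} yields
\begin{equation*}
    \LL_\kappa = i\eta\, S_{i\kappa}^2 + \paren{-\tfrac{1}{2} Id + C_{i\kappa}}\paren{\tfrac{1}{2} Id + C_{i\kappa}} + K,
\end{equation*}
where $K := i\eta\, \delta S_\kappa \circ S_{i\kappa} + \delta C_\kappa \circ (\tfrac{1}{2} Id + C_{i\kappa})$ is compact on $\HHs^{-1/2}_\times(\divt_\Gm, \Gm)$ by Lemma~\ref{lem:compact1} and the ideal property of compact operators. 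The Calder\'{o}n identity $C_{i\kappa}^2 - \kappa^2 S_{i\kappa}^2 = \tfrac{1}{4} Id$ is obtained by applying the Stratton-Chu representation formula \eqref{eq:representation} inside $\Om$ to a generic solution of \eqref{eq:wave1} at wave number $i\kappa$, taking both interior traces $\gm_D^-$ and $\gm_N^-$, and using the jump and symmetry relations \eqref{eq:jump}--\eqref{eq:sym2}; this establishes the idempotence of the interior Calder\'{o}n projector, whose diagonal entry is precisely the stated identity. Rearranging gives
\begin{equation*}
    \paren{-\tfrac{1}{2} Id + C_{i\kappa}}\paren{\tfrac{1}{2} Id + C_{i\kappa}} = C_{i\kappa}^2 - \tfrac{1}{4} Id = \kappa^2\, S_{i\kappa}^2,
\end{equation*}
and therefore $\LL_\kappa = (i\eta + \kappa^2)\, S_{i\kappa}^2 + K$.

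Set $\Theta_\Gm := S_{i\kappa}^{-1}$; this operator is a continuous isomorphism on $\HHs^{-1/2}_\times(\divt_\Gm, \Gm)$ by Corollary~\ref{cor:ellipticity} (combined with the Lax-Milgram lemma and the self-duality of the trace space from Theorem~\ref{thm:duality}). Using the symmetry of $S_{i\kappa}$ followed by the anti-symmetry of $\inprod{\cdot,\cdot}_{\times,\Gm}$,
\begin{equation*}
    \inprod{\Theta_\Gm \ovl{\ub}, S_{i\kappa}^2 \ub}_{\times,\Gm} = \inprod{S_{i\kappa}\ub, S_{i\kappa}(S_{i\kappa}^{-1}\ovl{\ub})}_{\times,\Gm} = \inprod{S_{i\kappa}\ub, \ovl{\ub}}_{\times,\Gm} = -\inprod{\ovl{\ub}, S_{i\kappa}\ub}_{\times,\Gm}.
\end{equation*}
Defining the sesquilinear form $c(\ub, \vb) := -\inprod{\Theta_\Gm \ovl{\vb}, K\ub}_{\times,\Gm}$, which is compact because $K$ is compact and $\Theta_\Gm$ is continuous, one obtains
\begin{equation*}
    \inprod{\Theta_\Gm \ovl{\ub}, \LL_\kappa \ub}_{\times,\Gm} + c(\ub, \ub) = -(i\eta + \kappa^2)\, \inprod{\ovl{\ub}, S_{i\kappa}\ub}_{\times,\Gm}.
\end{equation*}
The right-hand pairing is real and bounded below by $C\,\|\ub\|^2_{\HHs^{-1/2}_\times(\divt_\Gm,\Gm)}$ by Corollary~\ref{cor:ellipticity}, while $|i\eta + \kappa^2| = \sqrt{\eta^2+\kappa^4} > 0$ since $\eta \neq 0$; taking moduli yields \eqref{eq:Garding}. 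The main technical obstacle is the rigorous justification of the Calder\'{o}n identity on Lipschitz boundaries: although the algebra is standard for smooth surfaces, one must ensure that each trace relation and operator composition remains well-defined within $\HHs^{-1/2}_\times(\divt_\Gm, \Gm)$, which requires careful use of the continuity and jump properties of the Maxwell potentials established in Section~\ref{sec:potentials}.
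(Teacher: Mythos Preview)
Your decomposition in the first display contains an algebraic slip that invalidates the rest of the argument. You write $S_\kappa = S_{i\kappa} + \delta S_\kappa$, but with the definition $\delta S_\kappa := \delta A_\kappa + \kappa^{-2}\curlt_\Gm \circ \delta V_\kappa \circ \divt_\Gm$ from Lemma~\ref{lem:compact1} one actually obtains
\[
    S_\kappa - \delta S_\kappa \;=\; A_{i\kappa} + \kappa^{-2}\,\curlt_\Gm \circ V_{i\kappa} \circ \divt_\Gm \;=:\; \widetilde{S}_{i\kappa},
\]
which differs from $S_{i\kappa} = A_{i\kappa} - \kappa^{-2}\curlt_\Gm \circ V_{i\kappa} \circ \divt_\Gm$ by the sign of the hypersingular part (the sign flip comes from $(i\kappa)^{-2}=-\kappa^{-2}$ in \eqref{eq:efio}). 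Equivalently, $S_\kappa - S_{i\kappa}$ is \emph{not} compact: it equals $\delta A_\kappa + \kappa^{-2}\curlt_\Gm \circ (V_\kappa + V_{i\kappa}) \circ \divt_\Gm$, and the contribution $2\kappa^{-2}\curlt_\Gm \circ V_{i\kappa} \circ \divt_\Gm$ is precisely the term responsible for controlling $\|\divt_\Gm \ub\|_{\Hs^{-1/2}(\Gm)}$ in the ellipticity of $S_{i\kappa}$, so it cannot be compact. Hence the principal part of $\LL_\kappa$ is not $(i\eta+\kappa^2)\,S_{i\kappa}^2$ but $M_{i\kappa}\circ S_{i\kappa}$ with $M_{i\kappa}=i\eta\,\widetilde{S}_{i\kappa}+\kappa^2 S_{i\kappa}$, and your choice $\Theta_\Gm=S_{i\kappa}^{-1}$ no longer reduces the pairing to a scalar multiple of $\inprod{\ovl{\ub},S_{i\kappa}\ub}_{\times,\Gm}$.

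The paper repairs this by taking $\Theta_\Gm = S_{i\kappa}$ (not its inverse) and verifying directly that $M_{i\kappa}$ is $\HHs^{-1/2}_\times(\divt_\Gm,\Gm)$-elliptic: its real part is $\kappa^2\inprod{\ovl{\ub},A_{i\kappa}\ub}_{\times,\Gm}+\inprod{\divt_\Gm\ovl{\ub},V_{i\kappa}\divt_\Gm\ub}_{1/2,\Gm}$, both terms with positive sign, so Theorem~\ref{thm:ellipticity} applies. One then bounds $\bigl|\inprod{S_{i\kappa}\ovl{\ub},\,M_{i\kappa}S_{i\kappa}\ub}_{\times,\Gm}\bigr|\ge C_1\|S_{i\kappa}\ub\|^2\ge C\|\ub\|^2$ via the ellipticity of $S_{i\kappa}$. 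Your Calder\'{o}n identity \eqref{eq:Calderon} and the identification of the compact remainder $K=R_\kappa$ are both correct and appear verbatim in the paper; only the leading term of the splitting and the choice of $\Theta_\Gm$ need to be amended.
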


\begin{proof}
We rewrite the integral operator $\LL_\kappa$ in the following form
\begin{equation}
\label{eq:split}
    \LL_\kappa = \wh{\LL}_{\kappa} + R_\kappa,
\end{equation}
where $\wh{\LL}_{\kappa} : \HHs^{-1/2}_{\times}(\divt_\Gm, \Gm) \to \HHs^{-1/2}_{\times}(\divt_\Gm, \Gm)$ is given by
\[
    \wh{\LL}_{\kappa} := i\eta \, \widetilde{S}_{i\kappa} \circ S_{i\kappa} + \paren{-\dfrac{1}{2} Id + C_{i\kappa}} \circ \paren{\dfrac{1}{2} Id + C_{i\kappa}},
\] 
with
\[
    S_{i\kappa} = A_{i\kappa} - \kappa^{-2} \curlt_\Gm \circ V_{i\kappa} \circ \divt_\Gm, \qqq \widetilde{S}_{i\kappa} = A_{i\kappa} + \kappa^{-2} \curlt_\Gm \circ V_{i\kappa} \circ \divt_\Gm.
\]
Here, the operator ${ R_\kappa}: \HHs^{-1/2}_{\times}(\divt_\Gm, \Gm) \to \HHs^{-1/2}_{\times}(\divt_\Gm, \Gm)$ reads as
\[
    { R_\kappa} := i\eta \, \delta S_\kappa \circ S_{i\kappa} + \delta C_{\kappa} \circ \paren{\dfrac{1}{2} Id + C_{i\kappa}},
\] 
which, according to Lemma~\ref{lem:compact1}, is a compact operator.
The following Calder\'{o}n projection identity can be deduced from the representation formula \eqref{eq:representation} and \cite[Formula~(35)]{BH2003}
\begin{equation}
\label{eq:Calderon}
    \paren{-\dfrac{1}{2} Id + C_{i\kappa}} \circ \paren{\dfrac{1}{2} Id + C_{i\kappa}} = \kappa^2 S_{i\kappa} \circ S_{i\kappa}.
\end{equation}
Next, let us denote
\begin{equation}
\label{eq:Mik}
    \begin{aligned}
        M_{i\kappa} & := \paren{i\eta \, \widetilde{S}_{i\kappa} +  \kappa^2 S_{i\kappa}} \\
        & \ = \paren{\kappa^2 + i\eta} A_{i\kappa} - \paren{1- i\eta \kappa^{-2}} \curlt_\Gm \circ V_{i\kappa} \circ \divt_\Gm.
    \end{aligned}
\end{equation}
By means of Theorem~\ref{thm:ellipticity}, one can easily see that the operator $M_{i\kappa}$ is $\HHs^{-1/2}_{\times}(\divt_\Gm, \Gm)$-elliptic in the sense
\begin{equation}
\label{eq:ellipticity_Mik}
    \begin{aligned}
        \Real \inprod{\ovl{\ub}, M_{i\kappa} \ub}_{\times, \Gm} & = \kappa^2 \inprod{\ovl{\ub}, A_{i\kappa} \ub}_{\times, \Gm} + \inprod{\divt_\Gm \ovl{\ub}, \paren{V_{i\kappa} \circ \divt_\Gm} \ub}_{\times, \Gm} \\
        & \ge C_1 \norm{\ub}_{\HHs^{-1/2}_{\times}(\divt_\Gm, \Gm)}^2,
    \end{aligned}
\end{equation}
for some constant $C_1 > 0$ independent of $\ub$. Then, $\wh{\LL}_{\kappa}$ can be rewritten as
\[
    \wh{\LL}_{\kappa} = M_{i\kappa} \circ S_{i\kappa},
\]
and hence
\[
    \LL_\kappa = M_{i\kappa} \circ S_{i\kappa} + R_\kappa.
\]
Finally, taking into account that the operator $S_{i\kappa}$ is also $\HHs^{-1/2}_{\times}(\divt_\Gm, \Gm)$-elliptic (cf. Corollary~\ref{cor:ellipticity}), we can conclude 
\begin{align*}
    \abs{\inprod{S_{i\kappa} \ovl{\ub}, \LL_\kappa \ub}_{\times, \Gm} - \inprod{S_{i\kappa} \ovl{\ub}, R_\kappa \ub}_{\times, \Gm}} & = \abs{\inprod{S_{i\kappa} \ovl{\ub}, \paren{M_{i\kappa} \circ S_{i\kappa}} \ub}_{\times, \Gm}} \\
    & \ge C_1 \norm{S_{i\kappa} \ub}^2_{\HHs^{-1/2}_{\times}(\divt_\Gm, \Gm)} \ge C \norm{\ub}^2_{\HHs^{-1/2}_{\times}(\divt_\Gm, \Gm)},
\end{align*}
for all $\ub \in \HHs^{-1/2}_{\times}(\divt_\Gm, \Gm)$ and for some constant $C > 0$ independent of $\ub$. In other words, the operator $\LL_\kappa$ satisfies the generalized G{\aa}rding inequality \eqref{eq:Garding} with the isomorphism $\Theta_\Gm := S_{i\kappa}$ and the compact sesquilinear form $c$ defined by
\[
    c(\ub, \vb) := -\inprod{S_{i\kappa} \ovl{\vb}, R_\kappa \ub}_{\times, \Gm}, \qqqq \fa \ub, \vb \in \HHs^{-1/2}_{\times}(\divt_\Gm, \Gm).
\]
\end{proof}
The injectivity of $\LL_\kappa$ (i.e., Theorem~\ref{thm:uniqueness}) and the generalized G{\aa}rding inequality \eqref{eq:Garding} imply that the variational problem \eqref{eq:vf} has a unique solution $\xib \in \HHs^{-1/2}_{\times}(\divt_\Gm, \Gm)$ (cf. \cite[Proposition~3]{BCS2002}).

\section{Galerkin discretization}
\label{sec:discretization}

This section aims to introduce a Galerkin boundary element discretization scheme for the variational problem \eqref{eq:vf}, i.e.,
\[
    \inprod{\ovl{\ub}, \LL_{\kappa}(\xib)}_{\times, \Gm} = - \inprod{\ovl{\ub}, \gm_D^+ \eb^{in}}_{\times, \Gm},
\]
where $\LL_\kappa$ is given by
\[
    \LL_{\kappa} = M_{i\kappa} \circ S_{i\kappa} + i\eta \, \delta S_\kappa \circ S_{i\kappa} + \delta C_{\kappa} \circ \paren{\dfrac{1}{2} Id + C_{i\kappa}}.
\]
The operator $M_{i\kappa}$ is defined by \eqref{eq:Mik} and satisfies the $\HHs^{-1/2}_\times(\divt_\Gm, \Gm)$-ellipticity \eqref{eq:ellipticity_Mik}. The analysis in this section holds for any wave number $\kappa > 0$. Therefore, we omit its specification.

\subsection{Mixed variational formulation}
Since $\LL_\kappa$ consists of composition operators, a direct Galerkin discretization of \eqref{eq:vf} is not possible. A typical approach is to switch \eqref{eq:vf} to a mixed variational formulation. To that end, we introduce two auxiliary unknowns $\vphib \in \HHs^{-1/2}_\times(\divt_\Gm, \Gm)$ and $\psib \in \HHs^{-1/2}_\times(\divt_\Gm, \Gm)$ as follows
\begin{equation}
    \label{eq:unknowns}
    \vphib := S_{i\kappa} \xib, \qqq \text{and} \qqq \psib := \paren{\dfrac{1}{2} Id + C_{i\kappa}} \xib.
\end{equation}
For the sake of brevity, in what follows, we denote by $\XXs_\Gm$ the space $\paren{\HHs^{-1/2}_\times(\divt_\Gm, \Gm)}^3$ equipped with its Euclidean norm.
Then, the mixed variational formulation of \eqref{eq:cfie} reads as: find $\xb := \paren{\xib, \vphib, \psib} \in \XXs_\Gm$ such that for all $\yb := (\ub, \vb, \wb) \in \XXs_\Gm$
\begin{equation}
\label{eq:mixed_vf}
    \begin{aligned}
        \inprod{\ovl{\ub}, M_{i\kappa} \vphib}_{\times, \Gm} + \inprod{\ovl{\ub}, \delta C_\kappa \psib}_{\times, \Gm} & + i\eta \inprod{\ovl{\ub}, \delta S_\kappa \vphib}_{\times, \Gm} && = -\inprod{\ovl{\ub}, \gm_D^+ \eb^{in}}_{\times, \Gm}, \\
        \inprod{\ovl{\vb}, S_{i\kappa} \xib}_{\times, \Gm} & - \inprod{\ovl{\vb}, \vphib}_{\times, \Gm} && = 0, \\
        \inprod{\ovl{\wb}, \paren{\tfrac{1}{2} Id + C_{i\kappa}} \xib}_{\times, \Gm} & -\inprod{\ovl{\wb}, \psib}_{\times, \Gm} && = 0.
    \end{aligned}
\end{equation}
Let $b = a + t$ be the sesquilinear form associated with \eqref{eq:mixed_vf}, where $a: \XXs_\Gm \times \XXs_\Gm \to \C$ and $t: \XXs_\Gm \times \XXs_\Gm \to \C$ are defined by
\begin{align*}
    a(\xb, \yb) & := \inprod{\ovl{\ub}, M_{i\kappa} \vphib}_{\times, \Gm} + \inprod{\ovl{\vb}, S_{i\kappa} \xib}_{\times, \Gm} + \inprod{\ovl{\wb}, \psib}_{\times, \Gm} \\
    & \qq \, - \inprod{\ovl{\vb}, \vphib}_{\times, \Gm} - \inprod{\ovl{\wb}, \paren{\tfrac{1}{2} Id + C_{i\kappa}} \xib}_{\times, \Gm} , \\
    t(\xb, \yb) & := \inprod{\ovl{\ub}, \delta C_\kappa \psib}_{\times, \Gm} + i\eta \inprod{\ovl{\ub}, \delta S_\kappa \vphib}_{\times, \Gm}.
\end{align*}
The sesquilinear form $t$ is compact and the principal part $a$ satisfies the following $\Theta$-coercivity condition.
\begin{lemma}
\label{lem:coercivity_a}
    There exist a constant $C > 0$ and an isomorphism $\Theta : \XXs_\Gm \to \XXs_\Gm$ such that
    \begin{equation}
    \label{eq:semi_elliptic}
        \abs{a\paren{\xb, \Theta \xb}} \ge C \norm{\xb}^2_{\XXs_\Gm}, \qqqq \fa \xb \in \XXs_\Gm.
    \end{equation}
\end{lemma}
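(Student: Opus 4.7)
The plan is to take $\Theta$ to be a block operator that swaps $\xib$ and $\vphib$ while post-composing the $\psib$-slot with the inverse of the ``Riesz-type'' map $S_{i\kappa}$. Concretely, we propose
\[
    \Theta(\xib, \vphib, \psib) := \paren{\vphib,\, \lambda\, \xib,\, \gamma\, S_{i\kappa}^{-1}\psib},
\]
with small positive constants $\lambda$ and $\gamma$ to be fixed later. Since $S_{i\kappa}$ is an isomorphism on $\HHs^{-1/2}_\times(\divt_\Gm, \Gm)$ (Corollary~\ref{cor:ellipticity}), $\Theta$ is an isomorphism on $\XXs_\Gm$, and $S_{i\kappa}^{-1}$ commutes with complex conjugation because its kernel is real. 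Expanding $a(\xb, \Theta \xb)$ then yields
\begin{align*}
    a(\xb, \Theta \xb)
    & = \inprod{\ovl{\vphib}, M_{i\kappa}\vphib}_{\times, \Gm} + \lambda \inprod{\ovl{\xib}, S_{i\kappa}\xib}_{\times, \Gm} + \gamma \inprod{S_{i\kappa}^{-1}\ovl{\psib}, \psib}_{\times, \Gm} \\
    & \qq - \lambda \inprod{\ovl{\xib}, \vphib}_{\times, \Gm} - \gamma \inprod{S_{i\kappa}^{-1}\ovl{\psib}, \paren{\tfrac{1}{2}Id + C_{i\kappa}}\xib}_{\times, \Gm}.
\end{align*}

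The three ``diagonal'' terms are expected to yield squared $\HHs^{-1/2}_\times(\divt_\Gm, \Gm)$-norms. The ellipticity of $M_{i\kappa}$ in \eqref{eq:ellipticity_Mik} controls the first, and the ellipticity of $S_{i\kappa}$ from Corollary~\ref{cor:ellipticity} controls the second directly. For the third, setting $\ub := S_{i\kappa}^{-1}\ovl{\psib}$ and exploiting the symmetry of $S_{i\kappa}$ gives $\inprod{S_{i\kappa}^{-1}\ovl{\psib}, \psib}_{\times, \Gm} = \inprod{\ub, S_{i\kappa}\ovl{\ub}}_{\times, \Gm} = \inprod{\ovl{\ub}, S_{i\kappa}\ub}_{\times, \Gm} \geqc \norm{\ub}^2$, and the boundedness of $S_{i\kappa}$ furnishes $\norm{\ub} \geqc \norm{\psib}$, so the pairing is bounded below by a multiple of $\norm{\psib}^2$. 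The two cross terms are estimated by self-duality (Theorem~\ref{thm:duality}) together with continuity of $\tfrac{1}{2}Id + C_{i\kappa}$ and $S_{i\kappa}^{-1}$, yielding absolute values bounded by $\lambda\norm{\xib}\norm{\vphib}$ and $\gamma C_3 \norm{\psib}\norm{\xib}$, respectively. Absorbing these with weighted Young inequalities and fixing $\lambda$ small enough to preserve the $\vphib$-coefficient, then $\gamma$ small relative to $\lambda$ to preserve the surviving $\xib$-coefficient, will produce $\Real a(\xb, \Theta\xb) \geqc \norm{\xb}^2_{\XXs_\Gm}$, from which \eqref{eq:semi_elliptic} follows.

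The main obstacle is the $\psib$-slot: the entry $\inprod{\ovl{\wb}, \psib}_{\times, \Gm}$ is a self-duality pairing, not an inner product on $\HHs^{-1/2}_\times(\divt_\Gm, \Gm)$, so a naive choice $\wb = \psib$ fails to produce a norm. The crucial device is to post-compose with $S_{i\kappa}^{-1}$, which converts this self-duality pairing into the $S_{i\kappa}$-induced elliptic form; this reproduces within the mixed formulation the role that $S_{i\kappa}$ played as $\Theta_\Gm$ in the unmixed analysis of Theorem~\ref{thm:coercivity}. A secondary technical point is that the $\norm{\xib}^2$ coefficient receives negative contributions from both cross terms, so $\lambda$ and $\gamma$ must be tuned in that order to keep all three diagonal coefficients strictly positive after the Young-type absorption.
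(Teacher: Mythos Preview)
Your proposal is correct and follows essentially the same approach as the paper: the same isomorphism $\Theta(\xib,\vphib,\psib)=(\vphib,\lambda\xib,\gamma S_{i\kappa}^{-1}\psib)$, the same use of the ellipticity of $M_{i\kappa}$ and $S_{i\kappa}$ for the diagonal terms, and the same $\veps$-Young absorption of the two cross terms with $\lambda$ chosen first and $\gamma$ second. The only cosmetic difference is that you bound $\Real a(\xb,\Theta\xb)$ whereas the paper bounds $|a(\xb,\Theta\xb)|$ directly; both routes are equivalent here since the diagonal part has positive real part and the cross terms are controlled in absolute value.
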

\begin{proof}
    For all $\xb := (\xib, \vphib, \psib) \in \XXs_\Gm$, we define the isomorphism $\Theta : \XXs_\Gm \to \XXs_\Gm$ by
    \begin{equation}
        \label{eq:isomorphism}
        \Theta \xb := \paren{\vphib, \alpha \xib, \beta S_{i\kappa}^{-1} \psib},
    \end{equation}
    with some positive constants $\alpha$ and $\beta$. Since the operator $S_{i\kappa}$ is $\HHs^{-1/2}_\times(\divt_\Gm, \Gm)$-elliptic, its inverse $S_{i\kappa}^{-1} : \HHs^{-1/2}_\times(\divt_\Gm, \Gm) \to \HHs^{-1/2}_\times(\divt_\Gm, \Gm)$ exists and is continuous. The following estimate holds
    \begin{equation}
    \label{eq:a}
        \begin{aligned}
            \abs{a\paren{\xb, \Theta \xb}} & \ge \abs{\inprod{\ovl{\vphib}, M_{i\kappa} \vphib}_{\times, \Gm} + \alpha \inprod{\ovl{\xib}, S_{i\kappa} \xib}_{\times, \Gm} + \beta \inprod{S_{i\kappa}^{-1} \ovl{\psib}, \psib}_{\times, \Gm}} \\
            & \qq - \alpha \abs{\inprod{\ovl{\xib}, \vphib}_{\times, \Gm}} - \beta \abs{\inprod{S_{i\kappa}^{-1} \ovl{\psib}, \paren{\tfrac{1}{2} Id + C_{i\kappa}} \xib}_{\times, \Gm}}.
        \end{aligned}
    \end{equation}
    Due to the $\HHs^{-1/2}_\times(\divt_\Gm, \Gm)$-ellipticity and the continuity of $M_{i\kappa}$ and $S_{i\kappa}$, there exist positive constants $C_1, C_2$ and $C_3$ such that
    \begin{align*}
        \Real \inprod{\ovl{\vphib}, M_{i\kappa} \vphib}_{\times, \Gm} & \ge C_1 \norm{\vphib}^2_{\HHs^{-1/2}_\times(\divt_\Gm, \Gm)}, \\
        \inprod{\ovl{\xib}, S_{i\kappa} \xib}_{\times, \Gm} & \ge C_2 \norm{\xib}^2_{\HHs^{-1/2}_\times(\divt_\Gm, \Gm)}, \\
        \inprod{S_{i\kappa}^{-1} \ovl{\psib}, \psib}_{\times, \Gm} & \ge C_2 \norm{S_{i\kappa}^{-1} \psib}^2_{\HHs^{-1/2}_\times(\divt_\Gm, \Gm)} \ge C_3 \norm{\psib}^2_{\HHs^{-1/2}_\times(\divt_\Gm, \Gm)},
    \end{align*}
    for all $\vphib, \xib, \psib \in \HHs^{-1/2}_\times(\divt_\Gm, \Gm)$. By means of the $\veps$-Young inequality, the last two terms on the right-hand side of \eqref{eq:a} can be bounded as follows
    \begin{align*}
        & \abs{\inprod{\ovl{\xib}, \vphib}_{\times, \Gm}} \le \veps \norm{\xib}^2_{\HHs^{-1/2}_\times(\divt_\Gm, \Gm)} + C_\veps \norm{\vphib}^2_{\HHs^{-1/2}_\times(\divt_\Gm, \Gm)}, \\
        & \abs{\inprod{S_{i\kappa}^{-1} \ovl{\psib}, \paren{\tfrac{1}{2} Id + C_{i\kappa}} \xib}_{\times, \Gm}} \le \veps \norm{\psib}^2_{\HHs^{-1/2}_\times(\divt_\Gm, \Gm)} + C_\veps \norm{\xib}^2_{\HHs^{-1/2}_\times(\divt_\Gm, \Gm)},
    \end{align*}
    for an arbitrarily small constant $\veps > 0$ and some constant $C_\veps > 0$ dependent of $\veps$. Combining all above estimates, we arrive at
    \[
        \abs{a\paren{\xb, \Theta \xb}} \ge \min \brac{C_1 - \alpha C_\veps, \alpha (C_2 - \veps) - \beta C_\veps, \beta (C_3 - \veps)} \norm{\xb}^2_{\XXs_\Gm}.
    \]
    Finally, we can choose sufficiently small positive constants $\veps, \alpha$ and $\beta$ (firstly $\veps$, then $\alpha$, and finally $\beta$) such that
    \[
        \min \brac{C_1 - \alpha C_\veps, \alpha (C_2 - \veps) - \beta C_\veps, \beta (C_3 - \veps)} \ge C,
    \]
    for some constant $C > 0$.
\end{proof}
As a result, the sesquilinear form $b$ satisfies a generalized G{\aa}rding inequality. Invoking a Fredholm alternative argument, the unique solvability of \eqref{eq:mixed_vf} follows from the uniqueness of its solution (obtained from that of \eqref{eq:vf}). In particular, the following continuous inf-sup condition is satisfied 
\begin{equation}
    \sup_{\yb \in \XXs_\Gm \setminus \brac{\zrb}} \dfrac{\abs{b(\xb, \yb)}}{\norm{\yb}_{\XXs_\Gm}} \ge C \norm{\xb}_{\XXs_\Gm} \qqqqq \fa \xb \in \XXs_\Gm.
\end{equation}

\subsection{Galerkin boundary element discretization}
Let $\Om$ be a polyhedron and $(\Gm_h)_{h > 0}$ be a family of shape-regular, quasi-uniform triangulations of the surface $\Gm$, which only comprise flat triangles \cite{Ciarlet2002,Monk2003}. The parameter $h$ stands for the meshwidth, which equals the length of the longest edge of triangulation $\Gm_h$. We denote by $\TT_h$ and $\EE_h$ the sets of all triangles and edges of $\Gm_h$, respectively. On each triangle $T \in \TT_h$, we equip the lowest-order triangular Raviart-Thomas space\footnote{In computational engineering, the Raviart-Thomas boundary elements are known as Rao-Wilton-Glisson (RWG) boundary elements.} \cite{RT1977}
\[
    \RT_0(T) := \brac{\xb \mapsto \ab + b \xb : \ab \in \C^2, b \in \C}.
\]
This local space gives rise to the global $\divt_\Gm$-conforming boundary element space
\[
    \UUs_h := \brac{\ub_h \in \HHs^{-1/2}_\times(\divt_\Gm, \Gm) : \left.\ub_h\right|_T \in \RT_0(T), \, \fa T \in \TT_h},
\]
endowed with the edge degrees of freedom \cite{HS2003b}
\[
    \phi_e(\ub_h) := \int_e (\ub_h \times \nv_j) \cdot \ds, \qqqq \fa e \in \EE_h,
\]
where $\nv_j$ is the normal vector of a triangle $T$ in whose closure $e$ is contained. 

Next, we search for a boundary element space $\VVs_h \sst \HHs^{-1/2}_\times(\divt_\Gm, \Gm)$ such that $\dim \VVs_h = \dim \UUs_h$ and the duality paring $\langle{\cdot, \cdot}\rangle_{\times, \Gm}$ on $\VVs_h \times \UUs_h$ satisfies the uniform discrete inf-sup condition
\begin{equation}
    \label{eq:inf_sup_dual}
    \sup_{\vb_h \in \VVs_h \setminus \brac{\zrb}} \dfrac{\abs{\inprod{\ovl{\vb_h}, \ub_h}_{\times, \Gm}}}{\norm{\vb_h}_{\HHs^{-1/2}_\times(\divt_\Gm, \Gm)}} \ge C \norm{\ub_h}_{\HHs^{-1/2}_\times(\divt_\Gm, \Gm)}, \qq \fa \ub_h \in \UUs_h.
\end{equation}
As suggested in \cite[Section~4]{Hiptmair2006}, a suitable choice for $\VVs_h$ is the space of Buffa-Christiansen $\divt_\Gm$-conforming boundary elements defined on the barycentric refinement of $\Gm_h$ \cite{BC2007}. It is worthy noting that the unions $(\UUs_h)_{h > 0}$ and $(\VVs_h)_{h > 0}$ are dense in $\HHs^{-1/2}_\times(\divt_\Gm, \Gm)$, see \cite[Corollary~5]{BH2003} and \cite{BC2007}.

Now, we are in the position to propose a Galerkin boundary element discretization of the mixed variational problem \eqref{eq:mixed_vf}. Let us denote the discrete product subspaces 
\[
    \XXs_h := \VVs_h \times \UUs_h \times \UUs_h \sst \XXs_\Gm, \qq \text{and} \qq \YYs_h := \UUs_h \times \VVs_h\times \VVs_h \sst \XXs_\Gm.
\]
Obviously, $(\XXs_h)_{h > 0}$ and $(\YYs_h)_{h > 0}$ are dense in $\XXs_\Gm$. Then, the discrete formulation of \eqref{eq:mixed_vf} reads as: find $\xb_h := \paren{\xib_h, \vphib_h, \psib_h} \in \XXs_h$ such that for all $\yb_h := \paren{\ub_h, \vb_h, \wb_h} \in \YYs_h$
\begin{equation}
\label{eq:dis_vf}
    \begin{aligned}
        \inprod{\ovl{\ub_h}, M_{i\kappa} \vphib_h}_{\times, \Gm} + \inprod{\ovl{\ub_h}, \delta C_\kappa \psib_h}_{\times, \Gm} & + i\eta \inprod{\ovl{\ub_h}, \delta S_\kappa \vphib_h}_{\times, \Gm} && = -\inprod{\ovl{\ub_h}, \gm_D^+ \eb^{in}}_{\times, \Gm}, \\
        \inprod{\ovl{\vb_h}, S_{i\kappa} \xib_h}_{\times, \Gm} & - \inprod{\ovl{\vb_h}, \vphib_h}_{\times, \Gm} && = 0, \\
        \inprod{\ovl{\wb_h}, \paren{\tfrac{1}{2} Id + C_{i\kappa}} \xib_h}_{\times, \Gm} & - \inprod{\ovl{\wb_h}, \psib_h}_{\times, \Gm} && = 0.
    \end{aligned}
\end{equation}
The following result is analogous to Lemma~\ref{lem:coercivity_a}.
\begin{lemma}
\label{lem:coercivity_a_h}
    There exist a constant $C > 0$ independent of $h$ and an isomorphism $\Theta_h : \XXs_h \to \YYs_h$ such that
    \begin{equation}
    \label{eq:semi_elliptic_h}
        \abs{a\paren{\xb_h, \Theta_h \xb_h}} \ge C \norm{\xb_h}^2_{\XXs_\Gm}, \qqqq \fa \xb_h \in \XXs_h.
    \end{equation}
\end{lemma}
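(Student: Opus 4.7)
The plan is to emulate the proof of Lemma~\ref{lem:coercivity_a} at the discrete level. The sole obstruction is that the third component of the continuous isomorphism, $\beta S_{i\kappa}^{-1} \psib$, does not lie in $\VVs_h$ in general. I would remedy this by introducing a discrete Petrov--Galerkin projection $\Pi_h : \HHs^{-1/2}_\times(\divt_\Gm, \Gm) \to \VVs_h$, defined for each $\fb$ as the unique element of $\VVs_h$ satisfying
\[
    \inprod{\ovl{\Pi_h \fb}, \ub_h}_{\times, \Gm} = \inprod{\ovl{\fb}, \ub_h}_{\times, \Gm}, \qq \fa \ub_h \in \UUs_h.
\]
By the uniform discrete inf-sup~\eqref{eq:inf_sup_dual} together with $\dim \VVs_h = \dim \UUs_h$, the operator $\Pi_h$ is well defined and enjoys the $h$-uniform continuity bound $\norm{\Pi_h \fb}_{\HHs^{-1/2}_\times(\divt_\Gm, \Gm)} \le C \norm{\fb}_{\HHs^{-1/2}_\times(\divt_\Gm, \Gm)}$.

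Next I would set
\[
    \Theta_h (\xib_h, \vphib_h, \psib_h) := \paren{\vphib_h, \, \alpha \xib_h, \, \beta \, \Pi_h \paren{S_{i\kappa}^{-1} \psib_h}},
\]
with positive constants $\alpha, \beta$ fixed later, and expand $a(\xb_h, \Theta_h \xb_h)$ into the same five terms as in the continuous proof, with $\Pi_h(S_{i\kappa}^{-1} \psib_h)$ in place of $S_{i\kappa}^{-1} \psib_h$. The first two diagonal contributions are bounded below by $C_1 \norm{\vphib_h}^2$ and $\alpha C_2 \norm{\xib_h}^2$ via the ellipticity estimates~\eqref{eq:ellipticity_Mik} and Corollary~\ref{cor:ellipticity}. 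For the third diagonal term, the defining property of $\Pi_h$ applied with the admissible choice $\ub_h = \psib_h \in \UUs_h$ yields
\[
    \inprod{\ovl{\Pi_h(S_{i\kappa}^{-1} \psib_h)}, \psib_h}_{\times, \Gm} = \inprod{\ovl{S_{i\kappa}^{-1} \psib_h}, \psib_h}_{\times, \Gm} \ge C_3 \norm{\psib_h}^2_{\HHs^{-1/2}_\times(\divt_\Gm, \Gm)},
\]
where the lower bound follows from the ellipticity of $S_{i\kappa}^{-1}$ inherited from Corollary~\ref{cor:ellipticity} (observing that $S_{i\kappa}$ commutes with complex conjugation since $G_{i\kappa}$ is real-valued). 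The two remaining cross terms are controlled by the $\veps$-Young inequality exactly as in the continuous proof, with the uniform bound $\norm{\Pi_h(S_{i\kappa}^{-1} \psib_h)} \le C \norm{\psib_h}$ ensuring that the absorbing constants remain $h$-independent. A successive choice of $\veps$, then $\alpha$, then $\beta$ sufficiently small then reproduces the estimate~\eqref{eq:semi_elliptic_h} with an $h$-uniform constant.

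The main difficulty is the construction of $\Pi_h$: without the uniform inf-sup~\eqref{eq:inf_sup_dual} any discrete surrogate for $S_{i\kappa}^{-1} \psib_h$ would introduce an $h$-dependent constant into the third diagonal estimate, spoiling the desired uniform bound. Once $\Pi_h$ is in hand, injectivity of $\Theta_h$ is a direct consequence of the coercivity estimate, and the dimension equality $\dim \XXs_h = \dim \YYs_h$ upgrades injectivity to bijectivity, so $\Theta_h$ is indeed an isomorphism between $\XXs_h$ and $\YYs_h$.
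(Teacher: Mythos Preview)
Your argument is correct and runs parallel to the paper's, but the discrete surrogate for $S_{i\kappa}^{-1}$ is genuinely different. The paper constructs $S_h^{-1}\circ I_h : \UUs_h \to \VVs_h$ as the \emph{Galerkin inverse} of $S_{i\kappa}$, i.e.\ the map characterised by $\inprod{\ovl{\vb_h}, S_{i\kappa}(S_h^{-1}\circ I_h)\psib_h}_{\times,\Gm}=\inprod{\ovl{\vb_h},\psib_h}_{\times,\Gm}$ for all $\vb_h\in\VVs_h$; the third diagonal term is then handled by inserting this identity and invoking ellipticity of $S_{i\kappa}$ on $\VVs_h$. You instead apply the exact inverse $S_{i\kappa}^{-1}$ and then project with $\Pi_h$, so that the defining relation $\inprod{\ovl{\Pi_h(S_{i\kappa}^{-1}\psib_h)},\psib_h}_{\times,\Gm}=\inprod{\ovl{S_{i\kappa}^{-1}\psib_h},\psib_h}_{\times,\Gm}$ reduces the discrete third diagonal term \emph{verbatim} to the continuous one from Lemma~\ref{lem:coercivity_a}. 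Your route is arguably cleaner at this point, since no separate discrete ellipticity computation is needed; the price is that you must invoke the dual inf--sup (i.e.\ $\sup_{\ub_h}\abs{\inprod{\ovl{\vb_h},\ub_h}_{\times,\Gm}}/\norm{\ub_h}\ge C\norm{\vb_h}$) to get the uniform bound on $\Pi_h$, which follows from~\eqref{eq:inf_sup_dual} and $\dim\VVs_h=\dim\UUs_h$ but is not stated explicitly. The paper's construction has the additional advantage that $S_h^{-1}\circ I_h$ is reused later in the proof of Theorem~\ref{thm:condition_number}.
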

\begin{proof}
    Let us firstly define the operators $I_h : \UUs_h \to \VVs_h^\prime$ and $S_h : \VVs_h \to \VVs_h^\prime$ by
    \begin{equation}
        \label{eq:discrete_operator}
        \begin{aligned}
            \inprod{I_h \ub_h, \vb_h}_{\VVs_h^\prime \times \VVs_h} & := \inprod{\ovl{\vb_h}, \ub_h}_{\times, \Gm}, && \fa \ub_h \in \UUs_h, \, \vb_h \in \VVs_h, \\
            \inprod{S_h \wb_h, \vb_h}_{\VVs_h^\prime \times \VVs_h} & := \inprod{\ovl{\vb_h}, S_{i\kappa} \wb_h}_{\times, \Gm}, && \fa \vb_h, \wb_h \in \VVs_h.
        \end{aligned}
    \end{equation}
    Since the operator $S_{i\kappa}$ is $\HHs^{-1/2}_\times(\divt_\Gm, \Gm)$-elliptic, $S_h$ is invertible following the Lax-Milgram theorem. The discrete inf-sup condition \eqref{eq:inf_sup_dual} implies that the operator $I_h$ is injective. Hence, $I_h$ is bijective as $\dim \VVs_h = \dim \UUs_h$. Thus, $S_h^{-1} \circ I_h$ is an isomorphism between $\UUs_h$ and $\VVs_h$. In addition, the following identity holds 
    \begin{equation}
        \label{eq:identity_h}
        \inprod{\ovl{\vb_h}, (S_{i\kappa} \circ S_h^{-1} \circ I_h) \ub_h}_{\times, \Gm} = \inprod{\ovl{\vb_h}, \ub_h}_{\times, \Gm}, \qqq \fa \ub_h \in \UUs_h, \, \vb_h \in \VVs_h.
    \end{equation}
    On one hand, choosing $\vb_h = (S_h^{-1} \circ I_h) \ub_h$ in \eqref{eq:identity_h} leads to the following estimate
    \begin{equation}
    \label{eq:uniform_boundedness}
        \norm{(S_h^{-1} \circ I_h) \ub_h}_{\HHs^{-1/2}_\times(\divt_\Gm, \Gm)} \le C \norm{\ub_h}_{\HHs^{-1/2}_\times(\divt_\Gm, \Gm)}, \qq \fa \ub_h \in \UUs_h.
    \end{equation}
    On the other hand, the identity \eqref{eq:identity_h} and the uniform discrete inf-sup condition \eqref{eq:inf_sup_dual} allow us to deduce that
    \begin{align}
        C_1 \norm{(S_h^{-1} \circ I_h) \ub_h}_{\HHs^{-1/2}_\times(\divt_\Gm, \Gm)} 
        & \ge \sup_{\vb_h \in \VVs_h \setminus \brac{\zrb}} \dfrac{\abs{\inprod{\ovl{\vb_h}, (S_{i\kappa} \circ S_h^{-1} \circ I_h) \ub_h}_{\times, \Gm}}}{\norm{\vb_h}_{\HHs^{-1/2}_\times(\divt_\Gm, \Gm)}} \nonumber \\
        & = \sup_{\vb_h \in \VVs_h \setminus \brac{\zrb}} \dfrac{\abs {\inprod{\ovl{\vb_h}, \ub_h}_{\times, \Gm}}}{\norm{\vb_h}_{\HHs^{-1/2}_\times(\divt_\Gm, \Gm)}} \label{eq:inverse_S} \\
        & \ge C_2 \norm{\ub_h}_{\HHs^{-1/2}_\times(\divt_\Gm, \Gm)}, \nonumber
    \end{align}
    for all $\ub_h \in \UUs_h$. Now, we can define the isomorphism $\Theta_h : \XXs_h \to \YYs_h$ by
    \begin{equation}
        \label{eq:isomorphism_h}
        \Theta_h \xb_h := \paren{\vphib_h, \alpha \xib_h, \beta (S_h^{-1} \circ I_h) \psib_h}, \qq \fa \xb_h = \paren{\xib_h, \vphib_h, \psib_h} \in \XXs_h,
    \end{equation}
    with some positive $h$-independent constants $\alpha$ and $\beta$ . Following the lines of the proof of Lemma~\ref{lem:coercivity_a}, there exists a constant $C > 0$ independent of $h$ such that
    \begin{equation}
    \label{eq:estimate_a_h}
            \abs{a\paren{\xb_h, \Theta_h \xb_h}} 
            \ge C \norm{\xb_h}^2_{\XXs_\Gm}, \qqqq \fa \xb_h \in \XXs_h.
    \end{equation}
    Here, the uniform boundedness \eqref{eq:uniform_boundedness} and the following estimate were used
    \begin{align*}
        \inprod{(S_h^{-1} \circ I_h) \ovl{\psib_h}, \psib_h}_{\times, \Gm} 
        & = \inprod{(S_h^{-1} \circ I_h) \ovl{\psib_h}, (S_{i\kappa} \circ S_h^{-1} \circ I_h) \psib_h}_{\times, \Gm} \\
        & \ge C_1 \norm{(S_h^{-1} \circ I_h) \psib_h}^2_{\HHs^{-1/2}_\times(\divt_\Gm, \Gm)} \\
        & \ge C_2 \norm{\psib_h}^2_{\HHs^{-1/2}_\times(\divt_\Gm, \Gm)}.
    \end{align*}
    We note that $\alpha$ and $\beta$ in \eqref{eq:isomorphism_h} are not necessarily the same as in \eqref{eq:isomorphism}, and the positive $h$-independent constants $C, C_1$ and $C_2$ can be different in different contexts.
\end{proof}
The following discrete inf-sup condition is crucial for proving the unique solvability of the discrete variational problem \eqref{eq:dis_vf}. This result hints at a generalization of \cite[Theorem~4.2.9]{Sauter2011} for compact perturbations of a $T$-coercive operator. An equivalent result for compactly perturbed bijective operators is stated in \cite[Exercise~26.5]{Ern2021b}.
\begin{lemma}
\label{lem:dis_inf_sup}
    There exist an $h_0 > 0$ and a constant $C > 0$ such that for all $h < h_0$
    \begin{align}
        & \sup_{\yb_h \in \YYs_h \setminus \brac{\zrb}} \dfrac{\abs{b(\xb_h, \yb_h)}}{\norm{\yb_h}_{\XXs_\Gm}} \ge C \norm{\xb_h}_{\XXs_\Gm}, && \fa \xb_h \in \XXs_h. \label{eq:dis_inf_sup}
    \end{align}
\end{lemma}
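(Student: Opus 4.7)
The plan is to use a contradiction argument of Schatz type, which combines the uniform discrete $\Theta_h$-coercivity of the principal part $a$ (Lemma~\ref{lem:coercivity_a_h}), the compactness of the perturbation $t$ (cf. Lemma~\ref{lem:compact1}), and the unique solvability of the continuous problem \eqref{eq:mixed_vf}. As a preliminary step, I would observe that the isomorphism $\Theta_h$ is uniformly bounded: its definition \eqref{eq:isomorphism_h} together with the estimate \eqref{eq:uniform_boundedness} yields a constant $C_0 > 0$ independent of $h$ with $\norm{\Theta_h \xb_h}_{\XXs_\Gm} \le C_0 \norm{\xb_h}_{\XXs_\Gm}$ for all $\xb_h \in \XXs_h$. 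Combined with \eqref{eq:semi_elliptic_h}, this promotes Lemma~\ref{lem:coercivity_a_h} to a uniform discrete inf-sup condition for $a$:
\[
    \sup_{\yb_h \in \YYs_h \setminus \brac{\zrb}} \dfrac{\abs{a(\xb_h, \yb_h)}}{\norm{\yb_h}_{\XXs_\Gm}} \ge \tilde{C} \norm{\xb_h}_{\XXs_\Gm}, \qq \fa \xb_h \in \XXs_h,
\]
with $\tilde{C} > 0$ independent of $h$.

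I would then suppose for contradiction that \eqref{eq:dis_inf_sup} fails, producing sequences $h_n \to 0$ and $\xb_{h_n} \in \XXs_{h_n}$ with $\norm{\xb_{h_n}}_{\XXs_\Gm} = 1$ and
\[
    \delta_n := \sup_{\yb_{h_n} \in \YYs_{h_n} \setminus \brac{\zrb}} \dfrac{\abs{b(\xb_{h_n}, \yb_{h_n})}}{\norm{\yb_{h_n}}_{\XXs_\Gm}} \longrightarrow 0.
\]
Reflexivity of $\XXs_\Gm$ gives a subsequence (not relabeled) with $\xb_{h_n} \wconv \xb$ in $\XXs_\Gm$. For any $\yb \in \XXs_\Gm$, the density of $\bigcup_{h > 0} \YYs_h$ in $\XXs_\Gm$ supplies $\yb_{h_n} \in \YYs_{h_n}$ with $\yb_{h_n} \to \yb$ strongly; the continuity of $b$ together with weak-strong convergence then gives $b(\xb_{h_n}, \yb_{h_n}) \to b(\xb, \yb)$, while $\abs{b(\xb_{h_n}, \yb_{h_n})} \le \delta_n \norm{\yb_{h_n}}_{\XXs_\Gm} \to 0$. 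Hence $b(\xb, \yb) = 0$ for every $\yb \in \XXs_\Gm$, and the unique solvability of the continuous variational problem (Theorem~\ref{thm:uniqueness} combined with the generalized G{\aa}rding inequality of Theorem~\ref{thm:coercivity}) forces $\xb = \zrb$.

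To close the argument, the sesquilinear form $t$ induces, via the self-duality of $\HHs^{-1/2}_{\times}(\divt_\Gm, \Gm)$, an operator $T : \XXs_\Gm \to \XXs_\Gm$ that is compact by Lemma~\ref{lem:compact1}. Therefore $\xb_{h_n} \wconv \zrb$ implies $T \xb_{h_n} \to \zrb$ strongly, so
\[
    \sup_{\yb_{h_n} \in \YYs_{h_n} \setminus \brac{\zrb}} \dfrac{\abs{t(\xb_{h_n}, \yb_{h_n})}}{\norm{\yb_{h_n}}_{\XXs_\Gm}} \le \norm{T \xb_{h_n}}_{\XXs_\Gm} \longrightarrow 0,
\]
and the triangle inequality forces the discrete sup of $a = b - t$ to vanish in the limit as well, contradicting the uniform lower bound $\tilde{C}$ obtained above. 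The one delicate ingredient will be the uniform boundedness of $\Theta_h$, but this is already secured by the bound \eqref{eq:uniform_boundedness} inside the proof of Lemma~\ref{lem:coercivity_a_h}; once that is in hand, the remainder is a routine Fredholm/Schatz contradiction.
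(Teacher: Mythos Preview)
Your proposal is correct and follows essentially the same Schatz-type contradiction argument as the paper: assume failure of the discrete inf-sup, extract a weakly convergent subsequence, use density of the test spaces together with continuous well-posedness to identify the weak limit as zero, and then exploit the compactness of $t$ combined with the uniform discrete coercivity of $a$ from Lemma~\ref{lem:coercivity_a_h} to reach a contradiction. The only cosmetic difference is that you first promote \eqref{eq:semi_elliptic_h} to an inf-sup bound for $a$ and close by bounding $\sup\abs{a}/\norm{\cdot}$ via $\sup\abs{b}/\norm{\cdot}+\norm{T\xb_{h_n}}$, whereas the paper keeps the isomorphism $\Theta_h$ explicit throughout and shows strong convergence $\wt{\xb}_h\to\zrb$ directly from $\abs{b(\wt{\xb}_h,\Theta_h\wt{\xb}_h)}\to 0$ and $\abs{t(\wt{\xb}_h,\Theta_h\wt{\xb}_h)}\to 0$; the ingredients and logic are the same.
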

\begin{proof}
     Our proof mainly follows the approach in the proof of \cite[Theorem~4.2.9]{Sauter2011}. Let us define the operators $B : \XXs_\Gm \to \XXs_\Gm^\prime$ and $B_h : \XXs_h \to \YYs_h^\prime$ by
    \begin{align*}
        \inprod{B\xb, \yb}_{\XXs_\Gm^\prime \times \XXs_\Gm} & := b(\xb, \yb), && \fa \xb, \yb \in \XXs_\Gm, \\
        \inprod{B_h\xb_h, \yb_h}_{\YYs_h^\prime \times \YYs_h} & := b(\xb_h, \yb_h), && \fa \xb_h \in \XXs_h, \yb_h \in \YYs_h.
    \end{align*}
    We prove the discrete inf-sup condition \eqref{eq:dis_inf_sup} by contradiction. For this purpose, we assume that there exists a sequence $\{\wt{\xb}_h\}_h$ with $\wt{\xb}_h := (\wt{\xib}_h, \wt{\vphib}_h, \wt{\psib}_h) \in \XXs_h$ and $\norm{\wt{\xb}_h}_{\XXs_\Gm} = 1$ such that 
    \begin{equation}
    \label{eq:contradiction}
        \norm{B_h\wt{\xb}_h}_{\YYs_h^\prime} \ra 0, \qqq \text{for } \, h \ra 0.
    \end{equation}
    Since $\brac{\wt{\xb}_h}_h$ is bounded in $\XXs_\Gm$, there exists a subsequence (also denoted by $\brac{\wt{\xb}_h}_h$) such that
    \[
        \wt{\xb}_h \sra \wt{\xb} \qq \text{in } \, \XXs_\Gm, \qqq \text{for } \, h \ra 0.
    \]
    Following the lines of the proof of \cite[Theorem~4.2.9]{Sauter2011}, we have that $B\wt{\xb} = 0$ in $\XXs_\Gm^\prime$. This result together with the injectivity of $B$ (i.e., the uniqueness of a solution to \eqref{eq:mixed_vf}) implies that $\wt{\xb} = \zrb$.
    
    Next, we show the strong convergence $\wt{\xb}_h \to \wt{\xb}$ in $\XXs_\Gm$. According to Lemma~\ref{lem:coercivity_a_h}, there exists a constant $C > 0$ such that
    \begin{equation}
    \label{eq:estimate_a}
            \abs{a\paren{\wt{\xb}_h, \Theta_h \wt{\xb}_h}} 
            \ge C \norm{\wt{\xb}_h}^2_{\XXs_\Gm} = C \norm{\wt{\xb} - \wt{\xb}_h}^2_{\XXs_\Gm},
    \end{equation}
    with $\Theta_h$ defined in \eqref{eq:isomorphism_h}. The last equality holds due to $\wt{\xb} = \zrb$. As the sesquilinear form $t$ is compact, there exists a subsequence of $\brac{\wt{\xb}_h}_h$ (also denoted by $\brac{\wt{\xb}_h}_h$) such that
    \[
        \sup_{\yb \in \XXs_\Gm \setminus \brac{\zrb}, \, \norm{\yb}_{\XXs_\Gm} = 1} \abs{t(\wt{\xb}_h, \yb) - t(\wt{\xb}, \yb)} =: \delta_h \to 0 \qqq \text{for } \, h \to 0.
    \]
    Then, by means of the uniform boundedness \eqref{eq:uniform_boundedness} and the fact $\wt{\xb} = \zrb$, it holds that
    \begin{equation}
    \label{eq:estimate_t}
        \abs{t(\wt{\xb}_h, \Theta_h \wt{\xb}_h)} = \abs{t(\wt{\xb}_h, \Theta_h  \wt{\xb}_h) - t(\wt{\xb}, \Theta_h \wt{\xb}_h)} \le \delta_h \norm{\Theta_h \wt{\xb}_h}_{\XXs_\Gm} \le C_1 \delta_h =: \tilde{\delta}_h.
    \end{equation}
    Now, combining \eqref{eq:estimate_a} and \eqref{eq:estimate_t} together leads us to the following estimate
    \[
       \abs{b(\wt{\xb}_h, \Theta_h \wt{\xb}_h)} = \abs{a(\wt{\xb}_h, \Theta_h \wt{\xb}_h) + t(\wt{\xb}_h, \Theta_h \wt{\xb}_h)} \ge C \norm{\wt{\xb} - \wt{\xb}_h}^2_{\XXs_\Gm} - \tilde{\delta}_h.
    \]
    Finally, taking the assumption \eqref{eq:contradiction} into account, we end up with
    \[
        C \norm{\wt{\xb} - \wt{\xb}_h}^2_{\XXs_\Gm} \le \abs{b(\wt{\xb}_h, \Theta_h \wt{\xb}_h)} + \tilde{\delta}_h \to 0 \qqq \text{for } \, h \to 0.
    \]
    It means that $\wt{\xb}_h \to \wt{\xb} = \zrb$ when $h \to 0$, which is a contradiction to the assumption $\norm{\wt{\xb}_h}_{\XXs_\Gm} = 1$. Therefore, the discrete inf-sup condition \eqref{eq:dis_inf_sup} is satisfied.
\end{proof}
The unique solvability of the discrete variational problem \eqref{eq:dis_vf} and the convergence of Galerkin solutions are a direct consequence of Lemma~\ref{lem:dis_inf_sup}, see \cite[Theorem~4.2.1]{Sauter2011}. 
\begin{theorem}
\label{thm:dis_solvability}
    There exists an $h_0 > 0$ such that for all $h < h_0$, the discrete problem \eqref{eq:dis_vf} has a unique solution $\xb_h \in \XXs_h$. In addition, the discrete solutions $\xb_h$ converge to the solution $\xb \in \XXs_\Gm$ of the problem \eqref{eq:mixed_vf} and satisfy the quasi-optimal error estimate
    \begin{equation}
    \label{eq:quasioptimal_error}
        \norm{\xb - \xb_h}_{\XXs_\Gm} \le C \min_{\wt{\xb}_h \in \XXs_h} \norm{\xb - \wt{\xb}_h}_{\XXs_\Gm},
    \end{equation}
    for some constant $C > 0$ independent of $h$.
\end{theorem}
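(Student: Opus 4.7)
The plan is to derive all three assertions, namely unique solvability, convergence, and quasi-optimality, directly from the discrete inf-sup condition \eqref{eq:dis_inf_sup} established in Lemma~\ref{lem:dis_inf_sup}, by applying the standard Banach--Ne\v{c}as--Babu\v{s}ka machinery (cf. \cite[Theorem~4.2.1]{Sauter2011}). Since all the analytical difficulty has been absorbed into Lemma~\ref{lem:dis_inf_sup}, the present theorem is essentially a textbook corollary. I do need to make sure that the abstract ingredients genuinely apply here: the sesquilinear form $b$ is continuous on $\XXs_\Gm \times \XXs_\Gm$ (which follows from the continuity of $M_{i\kappa}$, $S_{i\kappa}$, $\tfrac{1}{2} Id + C_{i\kappa}$, $\delta C_\kappa$ and $\delta S_\kappa$ on $\HHs^{-1/2}_\times(\divt_\Gm, \Gm)$, cf. Theorem~\ref{thm:intops} and Lemma~\ref{lem:compact1}), and that the trial and test spaces have matching finite dimensions. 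For the latter, I would simply note $\dim \XXs_h = \dim \VVs_h + 2 \dim \UUs_h = \dim \UUs_h + 2 \dim \VVs_h = \dim \YYs_h$, using the identity $\dim \UUs_h = \dim \VVs_h$ stated just before \eqref{eq:inf_sup_dual}.

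For unique solvability, I would argue that \eqref{eq:dis_inf_sup} implies the discrete operator $B_h: \XXs_h \to \YYs_h^\prime$, defined by $\langle B_h \xb_h, \yb_h \rangle = b(\xb_h, \yb_h)$, is injective for all $h < h_0$. In finite dimensions with matching dimensions, injectivity is equivalent to bijectivity, hence \eqref{eq:dis_vf} admits a unique solution $\xb_h \in \XXs_h$. For quasi-optimality, the next step is a standard C\'ea-type estimate based on Galerkin consistency: both the continuous solution $\xb$ of \eqref{eq:mixed_vf} and the discrete solution $\xb_h$ satisfy the same right-hand side when tested against any $\yb_h \in \YYs_h \subset \XXs_\Gm$, giving the orthogonality $b(\xb - \xb_h, \yb_h) = 0$ for all $\yb_h \in \YYs_h$. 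For an arbitrary $\wt{\xb}_h \in \XXs_h$, invoking \eqref{eq:dis_inf_sup} on $\xb_h - \wt{\xb}_h \in \XXs_h$ yields
\[
    C \, \norm{\xb_h - \wt{\xb}_h}_{\XXs_\Gm} \le \sup_{\yb_h \in \YYs_h \setminus \{\zrb\}} \dfrac{\abs{b(\xb_h - \wt{\xb}_h, \yb_h)}}{\norm{\yb_h}_{\XXs_\Gm}} = \sup_{\yb_h \in \YYs_h \setminus \{\zrb\}} \dfrac{\abs{b(\xb - \wt{\xb}_h, \yb_h)}}{\norm{\yb_h}_{\XXs_\Gm}} \le \norm{b} \, \norm{\xb - \wt{\xb}_h}_{\XXs_\Gm},
\]
where $\norm{b}$ denotes the continuity constant of $b$. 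The triangle inequality $\norm{\xb - \xb_h}_{\XXs_\Gm} \le \norm{\xb - \wt{\xb}_h}_{\XXs_\Gm} + \norm{\xb_h - \wt{\xb}_h}_{\XXs_\Gm}$, followed by taking the infimum over $\wt{\xb}_h \in \XXs_h$, delivers \eqref{eq:quasioptimal_error} with the constant $1 + \norm{b}/C$.

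Convergence of the Galerkin solutions to the continuous one then follows immediately: the density of $\bigcup_h \XXs_h$ in $\XXs_\Gm$ noted just before \eqref{eq:dis_vf} ensures that the best-approximation error on the right-hand side of \eqref{eq:quasioptimal_error} tends to zero as $h \to 0$. There is no real obstacle to carrying this out; the entire argument is a template application of the standard abstract theory, and the only slightly delicate point, the discrete inf-sup condition for a compactly perturbed $T$-coercive form on non-conforming trial/test pairs, has already been dispatched in Lemma~\ref{lem:dis_inf_sup}.
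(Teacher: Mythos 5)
Your proposal is correct and follows essentially the same route as the paper: the paper simply cites the standard abstract result \cite[Theorem~4.2.1]{Sauter2011} applied to the discrete inf-sup condition of Lemma~\ref{lem:dis_inf_sup}, and your argument is precisely an unpacking of that machinery (injectivity plus matching dimensions $\dim \XXs_h = \dim \YYs_h$ for solvability, Galerkin orthogonality and the C\'ea-type bound for quasi-optimality, density of the discrete spaces for convergence). All the steps you spell out are sound, so nothing further is needed.
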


\subsection{Matrix representation}
Now, we examine the matrix representation of the Galerkin discrete system \eqref{eq:dis_vf} and its conditioning property. Let $\{\ub_1, \ub_2, \ldots, \ub_N\}$ and $\{\vb_1, \vb_2, \ldots, \vb_N\}$ be bases of $\UUs_h$ and $\VVs_h$, respectively, with $N := \dim \UUs_h = \dim \VVs_h$. We introduce the following Galerkin matrices 
\begin{align*}
    & {[\GGs]}_{mn} := \inprod{\ovl{\vb_m}, \ub_n}_{\times, \Gm}, 
    && {[\MMs]}_{mn} := \inprod{\ovl{\ub_m}, M_{i\kappa} \ub_n}_{\times, \Gm}, \\
    & {[\SSs]}_{mn} := \inprod{\ovl{\vb_m}, S_{i\kappa} \vb_n}_{\times, \Gm}, 
    && {[\ZZs]}_{mn} := i \eta \inprod{\ovl{\ub_m}, \delta S_{\kappa} \ub_n}_{\times, \Gm}, 
     \\
    & {[\CCs]}_{mn} := \inprod{\ovl{\ub_m}, \delta C_{\kappa} \ub_n}_{\times, \Gm}, 
    && {[\KKs]}_{mn} := \inprod{\ovl{\vb_m}, \paren{\tfrac{1}{2} Id + C_{i\kappa}} \vb_n}_{\times, \Gm},
\end{align*}
with $m, n = 1, 2, \ldots, N$. A discrete solution $\xb_h \in \XXs_h$ to \eqref{eq:dis_vf} can be represented as
\begin{equation}
\label{eq:discrete_solution}
    \xb_h = (\xib_h, \vphib_h, \psib_h) = \sum_{m = 1}^N (\wh{\xi}_m \vb_m, \wh{\vphi}_m \ub_m, \wh{\psi}_m \ub_m).
\end{equation}
Then, the expansion coefficient vector $[\wh{\xb}_h]_m := [(\wh{\xib}_h, \wh{\vphib}_h, \wh{\psib}_h)]_m := (\wh{\xi}_m, \wh{\vphi}_m,\wh{\psi}_m)$ is the solution to the following block matrix system 
\begin{equation}
\label{eq:matrix_system}
    \begin{pmatrix}
        \zrb & \MMs + \ZZs & \CCs \\
        \SSs & - \GGs & \zrb \\
        \KKs & \zrb & - \GGs 
    \end{pmatrix}
    \begin{pmatrix}
        \wh{\xib}_h \\
        \wh{\vphib}_h \\
        \wh{\psib}_h
    \end{pmatrix}
    =
    \begin{pmatrix}
        \bb \\
        \zrb \\
        \zrb
    \end{pmatrix},
\end{equation}
where the right-hand side vector ${[\bb]}_m := - \inprod{\ovl{\ub_m}, \gm^+_D \eb^{in}}_{\times, \Gm}$. Since the matrix $\GGs$ is invertible (cf. the discrete inf-sup condition \eqref{eq:inf_sup_dual}), the Schur complement of $\GGs$ in \eqref{eq:matrix_system} reads as
\begin{equation}
\label{eq:discrete_cfie}
    \LLs \wh{\xib}_h := \paren{\MMs \GGs^{-1} \SSs + \ZZs \GGs^{-1} \SSs + \CCs \GGs^{-1} \KKs} \wh{\xib}_h = \bb.
\end{equation}
This matrix equation represents a boundary element discretization of the variational formulation \eqref{eq:vf}. The following result is an immediate consequence of Theorem~\ref{thm:dis_solvability}.
\begin{corollary}
    There exists an $h_0 > 0$ such that for all $h < h_0$, the matrix equation \eqref{eq:discrete_cfie} has a unique solution $\wh{\xib}_h \in \C^N$. In addition, the corresponding $\xib_h \in \VVs_h$ defined by \eqref{eq:discrete_solution} is a quasi-optimal approximation to the continuous solution $\xib$ of \eqref{eq:vf} in the sense of \eqref{eq:quasioptimal_error}.
\end{corollary}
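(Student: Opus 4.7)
The plan is to reduce the matrix equation \eqref{eq:discrete_cfie} back to the discrete mixed variational problem \eqref{eq:dis_vf} and invoke Theorem~\ref{thm:dis_solvability}. The key link between the two is the invertibility of the cross-pairing matrix $\GGs$, which is already in place thanks to the uniform discrete inf-sup condition \eqref{eq:inf_sup_dual}.

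First, I would argue that $\GGs \in \C^{N \times N}$ is nonsingular. Indeed, if $\GGs \ab = \zrb$ for some $\ab \in \C^N$, letting $\ub_h := \sum_n a_n \ub_n \in \UUs_h$, one has $\inprod{\ovl{\vb_h}, \ub_h}_{\times, \Gm} = 0$ for every $\vb_h \in \VVs_h$, and \eqref{eq:inf_sup_dual} forces $\ub_h = \zrb$, hence $\ab = \zrb$. Since $\dim \VVs_h = \dim \UUs_h = N$, this establishes the bijectivity of $\GGs$.

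Next, expanding the Galerkin formulation \eqref{eq:dis_vf} in the bases $\brac{\vb_m}_{m=1}^{N}$ of $\VVs_h$ and $\brac{\ub_m}_{m=1}^{N}$ of $\UUs_h$ shows that its unique solvability is equivalent to that of the block linear system \eqref{eq:matrix_system}; this is bookkeeping with the matrix definitions preceding \eqref{eq:matrix_system}. With $\GGs$ invertible, the auxiliary blocks in \eqref{eq:matrix_system} can be Schur-eliminated, giving $\wh{\vphib}_h = \GGs^{-1} \SSs \wh{\xib}_h$ and $\wh{\psib}_h = \GGs^{-1} \KKs \wh{\xib}_h$, and substitution into the first block row reproduces \eqref{eq:discrete_cfie} verbatim. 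Hence \eqref{eq:discrete_cfie} is algebraically equivalent to \eqref{eq:matrix_system}, and therefore to \eqref{eq:dis_vf}.

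Finally, Theorem~\ref{thm:dis_solvability} delivers, for $h < h_0$, a unique solution $\xb_h = (\xib_h, \vphib_h, \psib_h) \in \XXs_h$ to \eqref{eq:dis_vf}, which through the above equivalence yields the unique $\wh{\xib}_h \in \C^N$ solving \eqref{eq:discrete_cfie}. The first component reconstructed via \eqref{eq:discrete_solution} coincides with $\xib_h$, so the trivial bound $\norm{\xib - \xib_h}_{\HHs^{-1/2}_\times(\divt_\Gm, \Gm)} \le \norm{\xb - \xb_h}_{\XXs_\Gm}$ together with \eqref{eq:quasioptimal_error} immediately delivers the quasi-optimal estimate; if a quasi-optimality expressed purely in terms of best approximations of $\xib$ in $\VVs_h$ is desired, one additionally selects the auxiliary components of $\wt{\xb}_h$ as best $\UUs_h$-approximations of $\vphib = S_{i\kappa} \xib$ and $\psib = (\tfrac{1}{2} Id + C_{i\kappa}) \xib$ and invokes the continuity of $S_{i\kappa}$ and $C_{i\kappa}$ together with the density of $\bigcup_h \UUs_h$ in $\HHs^{-1/2}_\times(\divt_\Gm, \Gm)$. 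No real obstacle arises: the content is a routine Schur-complement translation and the heavy lifting has been carried out in Theorem~\ref{thm:dis_solvability}.
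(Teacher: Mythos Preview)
Your argument is correct and matches the paper's intent: the paper states the corollary as ``an immediate consequence of Theorem~\ref{thm:dis_solvability}'' without further proof, and you have simply spelled out the Schur-complement equivalence between \eqref{eq:matrix_system} and \eqref{eq:discrete_cfie} (via the invertibility of $\GGs$ from \eqref{eq:inf_sup_dual}) that makes this immediate. The additional remark about recasting quasi-optimality purely in terms of $\VVs_h$-best approximations of $\xib$ goes slightly beyond what the corollary asserts, but it is harmless.
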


Next, let $\lambda_{\max}(\AAs)$ and $\lambda_{\min}(\AAs)$ be the maximal and minimal (by moduli) eigenvalues of a square matrix $\AAs$, respectively. The spectral condition number of $\AAs$ is defined by
\[
    \cond(\AAs) := \dfrac{\abs{\lambda_{\max}(\AAs)}}{\abs{\lambda_{\min}(\AAs)}}.
\]
We end up with the following conditioning property of the matrix system \eqref{eq:discrete_cfie}. 
\begin{theorem}
    \label{thm:condition_number}
    {There exist an $h_0 > 0$ and a constant $C > 0$} such that for all meshwidth $h < h_0$
    \[
        \cond(\GGs^{-\transpose} \LLs) \le C.
    \]  
\end{theorem}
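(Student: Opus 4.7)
The plan is to leverage the operator preconditioning framework of Hiptmair (see \cite{Hiptmair2006}), with the two essential ingredients already at hand: the uniform discrete inf-sup condition from Lemma~\ref{lem:dis_inf_sup} for the sesquilinear form $b$, and the uniform discrete inf-sup \eqref{eq:inf_sup_dual} for the dual pairing between $\UUs_h$ and $\VVs_h$ encoded by $\GGs$.

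First, I would push the inf-sup of $b$ on $\XXs_h \times \YYs_h$ down to the Schur complement $\LLs$. Since $\GGs$ is uniformly invertible by \eqref{eq:inf_sup_dual} (with uniformly bounded inverse in the $\HHs^{-1/2}_\times(\divt_\Gm, \Gm)$ norm), the two auxiliary blocks in \eqref{eq:matrix_system} can be eliminated without loss of stability, so that $\LLs$ inherits a uniform discrete inf-sup estimate. Concretely, $\LLs$ should then represent a continuous mapping from $\VVs_h$ into $\UUs_h^\prime$ (both equipped with norms induced by $\HHs^{-1/2}_\times(\divt_\Gm, \Gm)$) that is uniformly bounded with uniformly bounded inverse.

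Next, I would observe that the same is true of $\GGs^\transpose$: by Theorem~\ref{thm:duality} together with \eqref{eq:inf_sup_dual}, it represents a uniformly bounded and uniformly invertible mapping $\VVs_h \to \UUs_h^\prime$ in the same continuous norms. Composing, the preconditioned matrix $\GGs^{-\transpose} \LLs$ thus becomes a uniformly bounded and uniformly invertible endomorphism of $\VVs_h$ in the $\HHs^{-1/2}_\times(\divt_\Gm, \Gm)$ norm.

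The final, and most delicate, step is to translate this continuous-norm stability into a bound on the spectral ($\ell^2$) condition number of $\GGs^{-\transpose}\LLs$. This is the heart of operator preconditioning: the mesh-dependent scaling factors relating the Euclidean norm on coefficient vectors to the $\HHs^{-1/2}_\times(\divt_\Gm, \Gm)$ norm on $\VVs_h$ and $\UUs_h$ enter $\LLs$ and $\GGs^\transpose$ in the same way, and therefore cancel upon forming $\GGs^{-\transpose}\LLs$. I expect this cancellation to be the main obstacle, since it requires carefully tracking the basis-induced norm equivalences on both $\UUs_h$ and $\VVs_h$ (using the shape-regularity and quasi-uniformity of $\Gm_h$) to ensure that only $h$-independent constants survive in the final estimate.
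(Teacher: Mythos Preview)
Your approach is correct but takes a somewhat different route than the paper. The paper first factorizes $\LLs = \DDs \GGs^{-1} \SSs$ with $\DDs := \MMs + \ZZs + \CCs \GGs^{-1} \KKs \SSs^{-1} \GGs$, then transfers the inf-sup of $l_h$ (which it also deduces from Lemma~\ref{lem:dis_inf_sup}) to the form $d_h$ on $\UUs_h \times \UUs_h$ via the isomorphism $S_h^{-1} \circ I_h$, and finally invokes \cite{CN2002} and \cite[Theorem~2.1]{Hiptmair2006} for the pair $(\DDs,\SSs)$ linked by the Gram matrix $\GGs$. You bypass this factorization entirely: you derive the inf-sup for $l_h$ on $\VVs_h \times \UUs_h$ directly from the Schur complement and pair $\LLs$ against $\GGs^\transpose$ on the \emph{same} spaces. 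This is more direct; the only extra ingredient you need is the transposed inf-sup for the dual pairing (infimum over $\VVs_h$, supremum over $\UUs_h$), which follows from \eqref{eq:inf_sup_dual} by the standard closed-range argument since $\dim \UUs_h = \dim \VVs_h$. The paper's detour has the advantage of fitting exactly the hypotheses of the cited operator-preconditioning theorem, so the final step can be black-boxed.

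One correction to your plan: the step you flag as ``the most delicate'' is in fact straightforward, and neither shape-regularity nor quasi-uniformity enters. Once both $\LLs$ and $\GGs^\transpose$ represent uniformly bounded, uniformly inf-sup-stable forms on the same pair $\VVs_h \times \UUs_h$, the matrix $\GGs^{-\transpose}\LLs$ is similar, via the synthesis isomorphism $\C^N \to \VVs_h$, to an endomorphism of $\VVs_h$ that is uniformly bounded with uniformly bounded inverse in the $\HHs^{-1/2}_\times(\divt_\Gm, \Gm)$ norm. Similar matrices share eigenvalues, so the spectral condition number is bounded by the product of these two operator norms. The mesh-dependent scaling between $\ell^2$ and continuous norms cancels automatically in this similarity; you do not need to track it at all---that cancellation is precisely the content of operator preconditioning.
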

\begin{proof}
    We can rewrite the matrix $\LLs$ in the following form
    \[
        \LLs = \paren{\MMs + \ZZs + \CCs \GGs^{-1} \KKs \SSs^{-1} \GGs} \GGs^{-1} \SSs =: \DDs \GGs^{-1} \SSs.
    \]
    The invertibility of $\LLs$ implies that $\DDs$ is also invertible. Let $l_h : \VVs_h \times \UUs_h \to \C$ and $d_h : \UUs_h \times \UUs_h \to \C$ be the sesquilinear forms that produce the matrices $\LLs$ and $\DDs$, respectively. Obviously, $d_h$ is uniformly bounded. The discrete inf-sup condition \eqref{eq:dis_inf_sup} implies the uniform discrete inf-sup condition for $l_h$, which allows us to deduce that
    \begin{align*}
        \sup_{\wb_h \in \UUs_h \setminus \{\zrb\}} \dfrac{\abs{d_h(\ub_h, \wb_h)}}{\norm{\wb_h}_{\HHs^{-1/2}_\times(\divt_\Gm, \Gm)}} 
        & = \sup_{\wb_h \in \UUs_h \setminus \{\zrb\}} \dfrac{\abs{l_h\paren{(S_h^{-1} \circ I_h) \ub_h, \wb_h}}}{\norm{ \wb_h}_{\HHs^{-1/2}_\times(\divt_\Gm, \Gm)}} \\
        & \ge C_1 \norm{(S_h^{-1} \circ I_h) \ub_h}_{\HHs^{-1/2}_\times(\divt_\Gm, \Gm)} \\
        & \ge C_2 \norm{\ub_h}_{\HHs^{-1/2}_\times(\divt_\Gm, \Gm)},
    \end{align*}
    for all $\ub_h \in \UUs_h$ and for some constants $C_1, C_2 > 0$ independent of $h$. The operators $I_h : \UUs_h \to \VVs_h^\prime$ and $S_h : \VVs_h \to \VVs_h^\prime$ are defined by \eqref{eq:discrete_operator}. 
    The last inequality holds due to the inequality \eqref{eq:inverse_S}. According to \cite{CN2002} and \cite[Theorem~2.1]{Hiptmair2006}, the spectral condition number of the matrix $\GGs^{-\transpose} \DDs \GGs^{-1} \SSs$ is uniformly bounded.
\end{proof}

\begin{remark} 
    In computational engineering, the direct counterpart of the CFIE \eqref{eq:cfie} is more commonly used. It can be derived from the Stratton-Chu representation formula \eqref{eq:representation}. More specifically, taking the exterior tangential and Neumann traces of \eqref{eq:representation} for the solution $\eb$ of \eqref{eq:wave}-\eqref{eq:SilverMuller} respectively gives the standard EFIE and MFIE 
    \[
        S_\kappa (\gm_N^+ \eb) = \paren{\dfrac{1}{2} Id + C_\kappa} (\gm_D^+ \eb^{in}), \qqq \paren{\dfrac{1}{2} Id + C_\kappa} (\gm_N^+ \eb) = \kappa^2 S_\kappa (\gm_D^+ \eb^{in}).
    \]
    Left multiplying the EFIE and MFIE by $i\eta S_{i\kappa}$ and $-\tfrac{1}{2} Id + C_{i\kappa}$, respectively, then combining the resulting equations together leads us to the following CFIE
    \begin{equation}
        \label{eq:direct_CFIE}
        \begin{aligned}
            \wt{\LL}_\kappa (\gm_N^+ \eb) 
            & := \paren{i\eta \, S_{i\kappa} \circ S_{\kappa} + \paren{-\dfrac{1}{2} Id + C_{i\kappa}} \circ \paren{\dfrac{1}{2} Id + C_{\kappa}}} (\gm_N^+ \eb) \\
            & \,\, = \paren{i\eta \, S_{i\kappa} \circ \paren{\dfrac{1}{2} Id + C_\kappa} + \kappa^2 \paren{-\dfrac{1}{2} Id + C_{i\kappa}} \circ S_\kappa} (\gm_D^+ \eb^{in}).
        \end{aligned}
    \end{equation}
    The direct formulation \eqref{eq:direct_CFIE} is similar to the Yukawa-Calder\'{o}n CFIE introduced in \cite{MBC+2020}. The injectivity and coercivity of $\wt{\LL}_\kappa$ can be obtained by following the same approaches in the proof of Theorems~\ref{thm:uniqueness} and \ref{thm:coercivity}. The injectivity of $\wt{\LL}_\kappa$ was also shown in \cite{MBC+2020}, which was based on a standard approach from \cite{BEP+2009}.

    A typical Galerkin boundary element discretization of the operator $\wt{\LL}_\kappa$ reads as (see, e.g., \cite{MBC+2020,LCA+2023})
    \[
        \wt{\LLs} := \SSs \GGs^{-\transpose} \wt{\SSs} + \NNs \GGs^{-1} \wt{\NNs},
    \]
    where the Galerkin matrices are given by
    \begin{align*}
        & [\SSs]_{mn} := \inprod{\ovl{\vb_m}, S_{i\kappa} \vb_n}_{\times, \Gm}, \
        && [\wt{\SSs}]_{mn} := i\eta \inprod{\ovl{\ub_m}, S_\kappa \ub_n}_{\times, \Gm}, \\
        & [\NNs]_{mn} := \inprod{\ovl{\vb_m}, (-\tfrac{1}{2} Id + C_{i\kappa}) \, \ub_n}_{\times, \Gm}, 
        && [\wt{\NNs}]_{mn} := \inprod{\ovl{\vb_m}, (\tfrac{1}{2} Id + C_{\kappa}) \, \ub_n}_{\times, \Gm}.
    \end{align*}
    Several numerical experiments in \cite{MBC+2020,LCA+2023} showed that the corresponding discrete problem is uniquely solvable for all wave numbers $\kappa > 0$, and the matrix $\wt{\LLs}$ is well-conditioned both when a fine mesh is used and when a low frequency is considered. A similar discretization scheme can also be applied for the variational problem \eqref{eq:vf}. However, proofs of the corresponding discrete inf-sup conditions and the uniform boundedness of condition number are still missing from literature. We note that those proofs do not directly benefit from the arguments established in Lemma~\ref{lem:dis_inf_sup} and Theorem~\ref{thm:condition_number}. Therefore, they require a separate examination, which falls beyond the scope of this paper.
\end{remark}

\begin{remark}
    \label{rem:ikappa}
    The CFIE formulation \eqref{eq:cfie} can be generalized when replacing the purely imaginary wave number $i\kappa$ by any wave number $i\kappa^\prime$ with $\kappa^\prime > 0$. The analysis can immediately be extended to that case.
\end{remark}

\section{Numerical results}
\label{sec:results}

In this section, we perform some numerical experiments to evaluate the performance of the proposed CFIE and its Galerkin discretization. The scattering of electromagnetic waves by a sphere and a unit cube are considered. In all experiments, excitation is provided by an incident plane wave with electric field
\[
    \eb^{in}(\xb) = \hat{\xb} \exp(i\kappa \, \hat{\zb} \cdot \xb),
\]
where $\hat{\xb}$ and $\hat{\zb}$ stand for the unit vectors along the $x$-axis and $z$-axis, respectively. 

\subsection{Sphere}

We start with the scattering by a perfectly conducting sphere of radius $1\mathrm{m}$, centered at the origin of the coordinate system. 

The first experiment aims at demonstrating the solvability of the proposed CFIE \eqref{eq:cfie} {and the convergence of its Galerkin discretization \eqref{eq:discrete_cfie}}. We choose $\kappa = 4.4934$ which is a resonant wave number associated with the unit sphere (see \cite[Section~7.2]{Jin2010}). The exact solution $\eb_{\mathrm{exct}}$ to the problem \eqref{eq:wave}-\eqref{eq:SilverMuller} can be derived via the famous Mie series, see \cite[Section~9.5]{Monk2003} and \cite[Section~7.4.3]{Jin2010}. 

The unit sphere is approximated by planar triangulations with meshwidth $h$. The discrete solutions $\xib_h$ are computed by solving the matrix equation \eqref{eq:discrete_cfie} using the GMRES iterative method with tolerance $\veps_0 = 10^{-12}$. The corresponding scattered fields $\eb_h$ are computed based on the potential ansatz \eqref{eq:ansatz}. The coupling parameter $\eta = - \kappa^2$ is chosen, and four purely imaginary wave numbers $i\kappa^\prime$ (instead of $i\kappa)$, with $\kappa^\prime/ \kappa = 0.1, 1, 5$ and $10$, are examined, with the aim to support the assertion in Remark~\ref{rem:ikappa}. The following average pointwise error between $\eb_{\mathrm{exct}}$ and $\eb_h$ is calculated
\[
    \mathrm{err}_h := \dfrac{1}{N_S} \sum_{i = 1}^{N_S} \paren{\abs{\eb_h(\xb_i) - \eb_{\mathrm{exct}}(\xb_i)}^2 + \abs{\curlt \, \eb_h(\xb_i) - \curlt \, \eb_{\mathrm{exct}}(\xb_i)}^2}^{1/2},
\]
where $\xb_1, \xb_2, \ldots, \xb_{N_S}$, with $N_S = 5000$, are the evaluation points uniformly distributed on the centered sphere of radius $2\mathrm{m}$. Figure~\ref{fig:scatted_field} \textit{(left)} depicts the average pointwise error with respect to $h$ for different $\kappa^\prime$. On one hand, this result confirms the unique solvability of the CFIE \eqref{eq:cfie} at a resonant frequency as well as the convergence of its Galerkin discretization \eqref{eq:discrete_cfie}, regardless of the imaginary wave number $i\kappa^\prime$. On the other hand, the effect of $\kappa^\prime$ on numerical solutions is minor. When using larger $\kappa^\prime$, the corresponding matrices become to good approximation sparse, but more quadrature points are required to accurately compute the non-zero entries.

It is well-known that the choice of the coupling parameter $\eta$ has a major impact on the conditioning property of the Galerkin discrete system of CFIEs. There is no general theory concerning the optimal value of $\eta$ that yields the lowest condition number. Some discussions on this topic can be found in \cite{KS1983,Kress1985}. In the second experiment, we investigate the impact of the coupling parameter $\eta$ on the spectral condition number of $\GGs^{-\transpose} \LLs$ by considering different $\eta \in \R \setminus \{0\}$ with its absolute value $\abs{\eta}$ ranging from $10^{-4} \kappa^2$ to $10^4 \kappa^2$ (again with $\kappa = 4.4934$). A boundary mesh with meshwidth $h = 0.15\mathrm{m}$ is involved. Figure~\ref{fig:scatted_field} \textit{(right)} shows that the values of $\eta$ around $\pm \kappa^2$ are optimal for the unit sphere. In the following experiments in this section, the coupling parameter $\eta = - \kappa^2$ is chosen.

\begin{figure}
    \centering
    \begin{subfigure}[b]{0.49\textwidth}
        \centering
        \includegraphics[width=\textwidth]{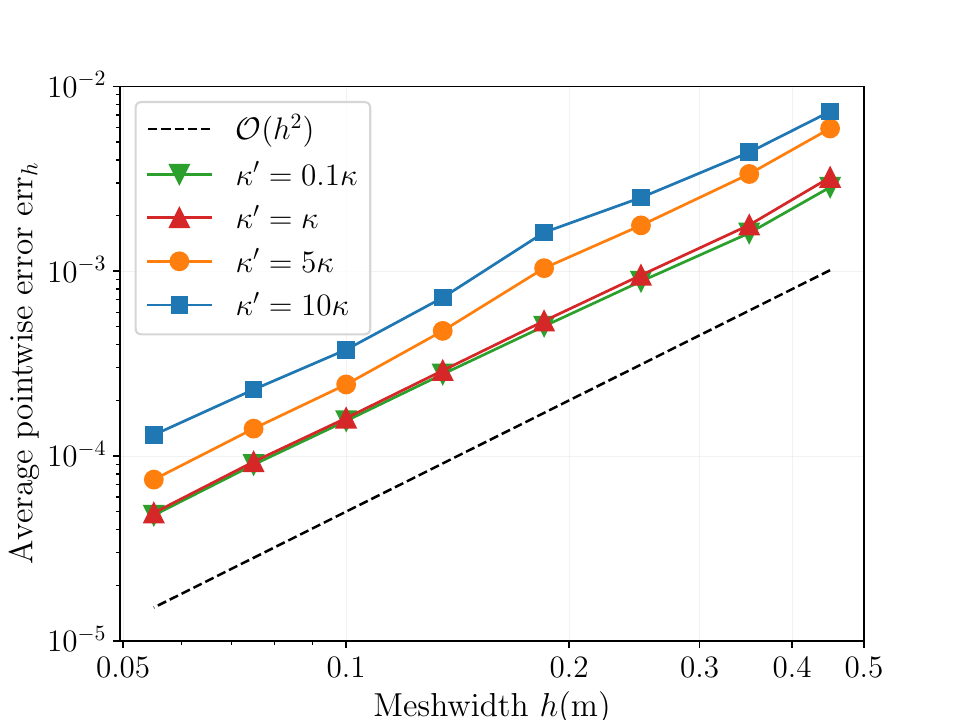}
    \end{subfigure}
    \hfill
    \begin{subfigure}[b]{0.49\textwidth}
        \centering 
        \includegraphics[width=\textwidth]{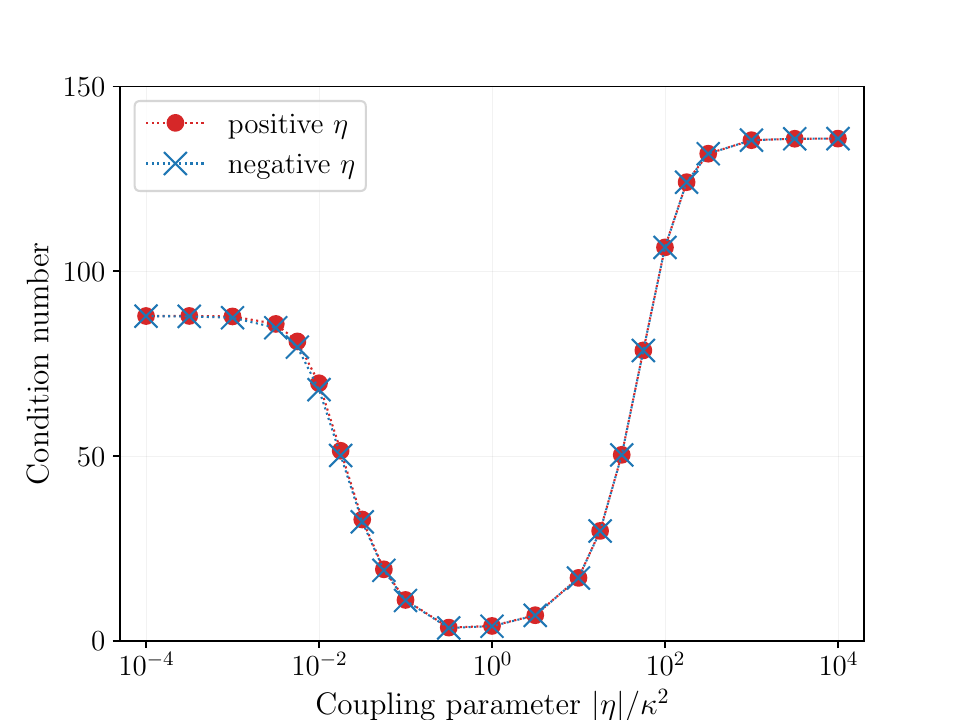}
    \end{subfigure}
    \caption{Average pointwise error of the scattered electric field $\eb$ with respect to meshwidth $h$ for different purely imaginary wave numbers $i\kappa^\prime$ (\textit{left}), and spectral condition number of matrices arising from the Galerkin discretization of the proposed CFIE with respect to coupling parameter $\eta$ (\textit{right}) for a sphere of radius $1\mathrm{m}$.}
    \label{fig:scatted_field}
\end{figure}

Next, we validate the stability of the Galerkin discretization \eqref{eq:discrete_cfie} by examining the condition number of the matrix $\GGs^{-\transpose} \LLs$ with varying meshwidth $h$ and wave number $\kappa$. The corresponding number of GMRES iterations\footnote{ Even though the condition number is only in special cases related to the number of required GMRES iterations, it seems to be a good predictor for the discretization scheme studied here. Establishing a rigorous relation between the condition number and the iteration count is out of the scope of this paper. Some relevant important insights on this topic can be found in \cite{EES1983,GPS1996,EE2001,CG2019}.} required to solve \eqref{eq:discrete_cfie} (with tolerance $\veps_0 = 10^{-8}$) is also reported to demonstrate the practicality of the proposed discretization scheme. For the purpose of comparison, the performance of the standard EFIE discretized by the linear Raviart-Thomas boundary elements is also presented. Firstly, the wave number is fixed at $\kappa = \frac{2\pi}{3}$ (which is far away from resonant wave numbers), and the meshwidth varies from $0.055\mathrm{m}$ to $0.45\mathrm{m}$. Figure~\ref{fig:sphere_cond} (\textit{left}) shows that, whereas the condition number of matrices derived from the EFIE and the corresponding number of iterations increase when the meshwidth decreases, those of the proposed CFIE remain stable. Secondly, the fixed meshwidth $h = 0.15\mathrm{m}$, while the wave number varies from $0.005$ to $4.28$. This range contains two resonant wave numbers $\kappa = 2.7437$ and $\kappa = 3.8702$ (see \cite[Section~7.2]{Jin2010}). As shown in Figure~\ref{fig:sphere_cond} (\textit{right}), the condition number of EFIE's discrete systems and the number of GMRES iterations required to solve those systems grow dramatically when the wave number gets close to the resonant ones or approaches 0. In contrast, the condition number of the discrete linear system \eqref{eq:discrete_cfie} and the corresponding number of iterations stay almost constant.

\begin{figure}
    \centering
    \begin{subfigure}[b]{0.49\textwidth}
        \centering
        \includegraphics[width=\textwidth]{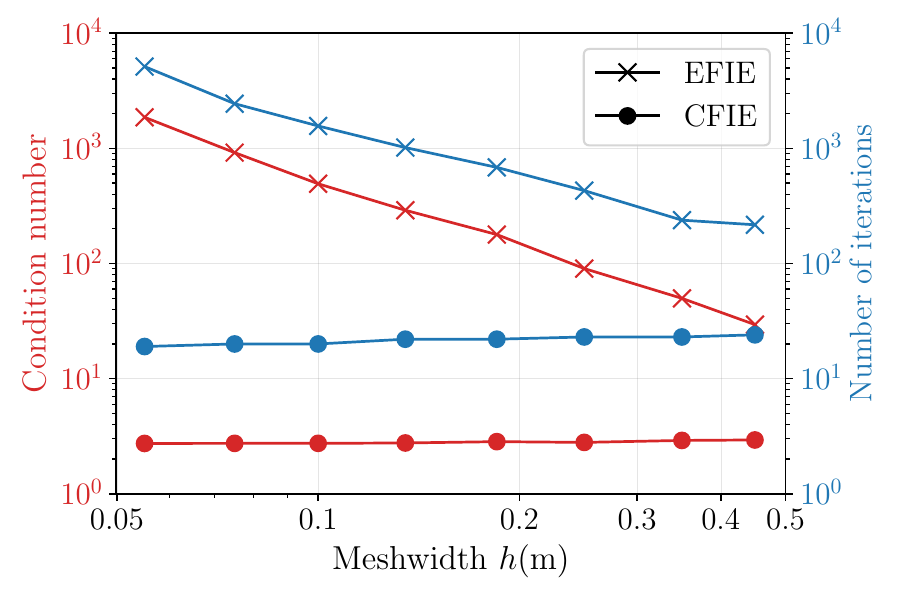}
    \end{subfigure}
    \hfill
    \begin{subfigure}[b]{0.49\textwidth}
        \centering 
        \includegraphics[width=\textwidth]{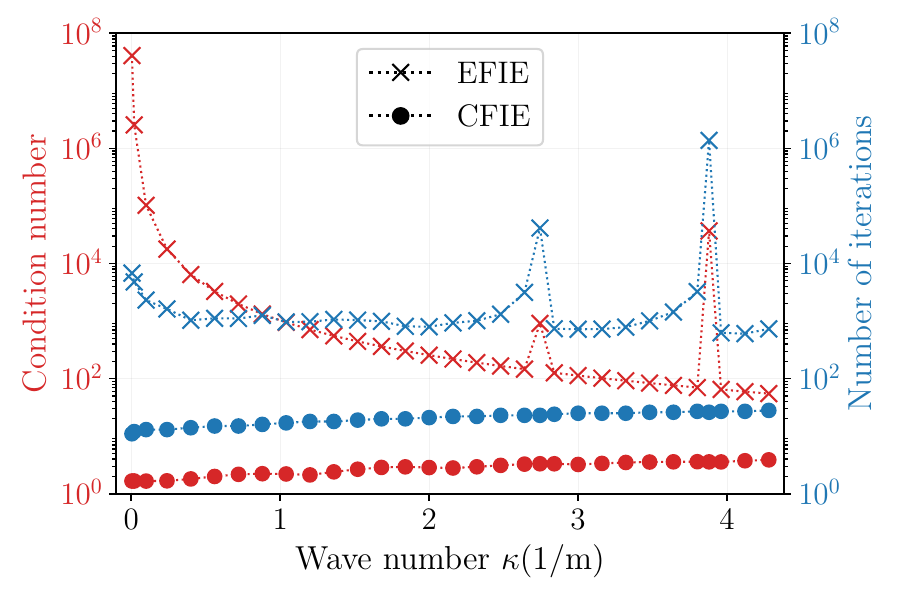}
    \end{subfigure}
    \caption{Condition number of matrices arising from the Galerkin discretization of the EFIE and the proposed CFIE, together with number of GMRES iterations required to solve the corresponding discrete systems for a sphere of radius $1\mathrm{m}$. \textit{Left:} the wave number is fixed at $\kappa = \tfrac{2\pi}{3}$, and the meshwidth is varying. \textit{Right:} the meshwidth is fixed at $h = 0.15\mathrm{m}$, and the wave number is varying.}
    \label{fig:sphere_cond}
\end{figure}

\subsection{Cube}
We now consider the scattering by a simple Lipschitz polyhedron: a cube of edge $1\mathrm{m}$. The resonant wave numbers associated with this domain are determined by $\pi \sqrt{l^2 + m^2 + n^2}$, where $l, m$ and $n$ are non-negative integers of which at most one is zero, see \cite[Section~10.4.2]{VanBladel2007}. We investigate the performance of the proposed Galerkin discretization scheme for the unit cube by repeating two last experiments for the sphere. Firstly, the wave number is fixed at $\kappa = \frac{\pi}{2}$, which is smaller than the smallest resonant wave number, and the meshwidth is ranging from $0.04\mathrm{m}$ to $0.35\mathrm{m}$. Afterwards, the meshwidth is chosen as $h = 0.1\mathrm{m}$, and the wave number varies in the range from $0.01$ to $6$, which contains two resonant wave numbers $\sqrt{2} \pi \approx 4.4429$ and $\sqrt{3} \pi \approx 5.4414$. The condition number of matrices derived from the Galerkin discretization of the standard EFIE and the proposed CFIE, and the corresponding number of GMRES iterations required to solve the discrete systems are depicted in Figure~\ref{fig:cube_cond}. The discretization of EFIE is unstable when the wave number $\kappa$ is close to the resonant ones or 0, or when a fine mesh is used. In both cases, the discrete linear system \eqref{eq:discrete_cfie} remains stable. These results corroborate the solvability of the CFIE \eqref{eq:cfie} for all wave numbers, as well as the well-conditioning of its Galerkin discretization regardless of the resolution to the discrete problem.

\begin{figure}
    \centering
    \begin{subfigure}[b]{0.49\textwidth}
        \centering
        \includegraphics[width=\textwidth]{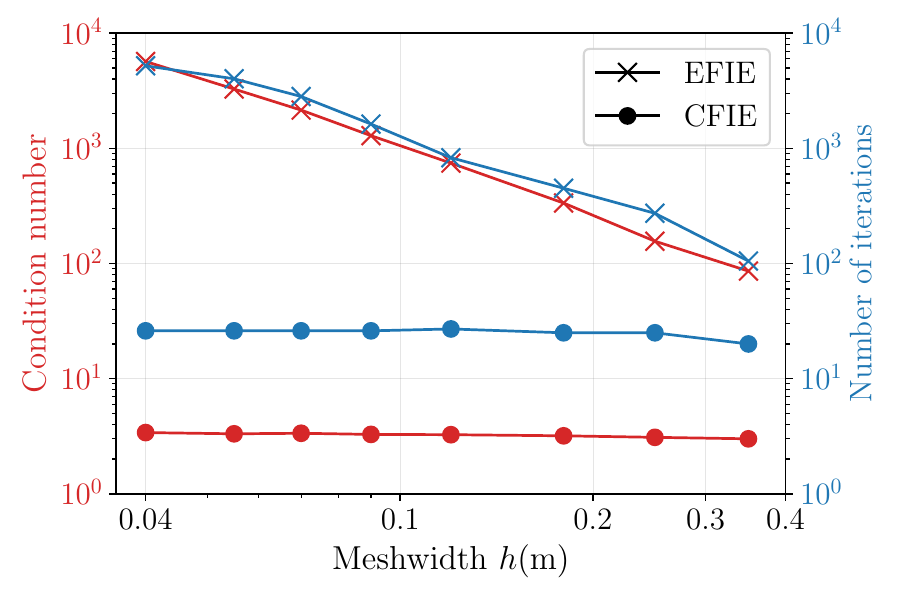}
    \end{subfigure}
    \hfill
    \begin{subfigure}[b]{0.49\textwidth}
        \centering 
        \includegraphics[width=\textwidth]{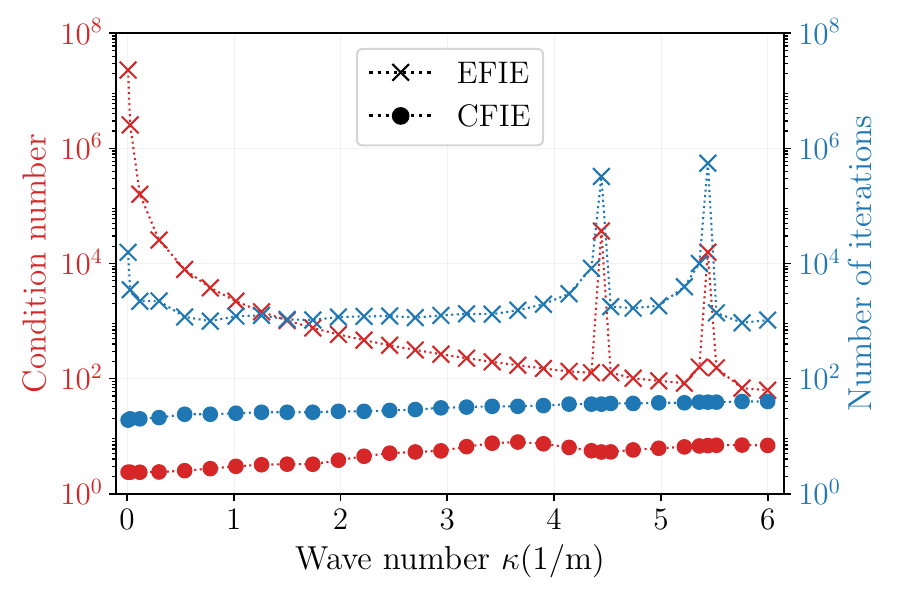}
    \end{subfigure}
    \caption{Condition number of matrices arising from the Galerkin discretization of the EFIE and the proposed CFIE, {together with number of GMRES iterations required to solve the corresponding discrete systems} for a cube of edge $1\mathrm{m}$. \textit{Left:} the wave number is fixed at $\kappa = \frac{\pi}{2}$, and the meshwidth is varying. \textit{Right:} the meshwidth is fixed at $h = 0.1\mathrm{m}$, and the wave number is varying.}
    \label{fig:cube_cond}
\end{figure}

\section{Conclusions}
\label{sec:conclusions}

In this contribution, we have introduced a boundary integral equation for electromagnetic scattering by a perfect electric conductor with Lipschitz continuous boundary. This equation yields a unique solution for all wave numbers. In addition, we have also proposed a Galerkin boundary element discretization for the integral equation, for which the unique solvability, the convergence and the well-conditioning have been demonstrated.

In a forthcoming work, the convergence rate of the proposed discretization scheme should be investigated. To boost the robustness of the numerical scheme, particularly for complex geometries with corners, other pairs of dual meshes and the corresponding boundary elements should also be considered.  


\bibliographystyle{siamplain}

\end{document}